\documentclass{amsart}
\usepackage{tikz, rigid-figures}
\usepackage{thmtools,thm-restate}
\usepackage{ifthen}
\usepackage{scalefnt}
\usepackage[latin1]{inputenc}
\usepackage[T1]{fontenc}
\usepackage{amsmath,amssymb,amsthm,bm}
\usepackage{graphicx}
\usepackage{hyperref}
\usepackage{upgreek}
\usepackage{cleveref}

\usepackage[marginpar]{todo}

\usepackage{booktabs,siunitx}
\sisetup{table-format=2.1}

\usepackage{scalerel}

\declaretheorem[numberwithin=section]{theorem}

\newcommand{\abs}[1]{\left\lvert #1 \right\rvert}
\newcommand{\co}{\colon\thinspace}

\newtheorem{lemma}[theorem]{Lemma}

\newtheorem{conjecture}[theorem]{Conjecture}

\newtheorem{proposition}[theorem]{Proposition}
\theoremstyle{definition}




\begin{document}
\date{\today}
\author[H. Alpert]{Hannah Alpert}
\address{Auburn University, 221 Parker Hall, Auburn, AL 36849}
\email{hcalpert@auburn.edu}
\author[M. Kahle]{Matthew Kahle}
\address{The Ohio State University, 231 West 18th Ave, Columbus, OH 43210}
\email{mkahle@math.osu.edu}
\thanks{MK gratefully acknowledges the support of NSF awards \#1839358 and \#2005630.}
\author[R. MacPherson]{Robert MacPherson}
\address{Institute for Advanced Study, 1 Einstein Drive, Princeton, NJ 08540}
\email{rdm@math.ias.edu}
\subjclass[2020]{55R80 (82B26)}
\keywords{Configuration space, hard spheres model, phase transition, pure braid group, factorial growth rate}

\title[Asymptotic Betti numbers for hard squares]{Asymptotic Betti numbers for hard squares in the homological liquid regime}
\maketitle

\begin{abstract}
We study configuration spaces $C(n; p, q)$ of $n$ ordered unit squares in a $p$ by $q$ rectangle.  Our goal is to estimate the Betti numbers for large $n$, $j$, $p$, and $q$.  We consider sequences of area-normalized coordinates, where $(\frac{n}{pq}, \frac{j}{pq})$ converges as $n$, $j$, $p$, and $q$ approach infinity.  For every sequence that converges to a point in the ``feasible region'' in the $(x,y)$-plane identified in \cite{ABKMS20}, we show that the factorial growth rate of the Betti numbers is the same as the factorial growth rate of $n!$. This implies that (1) the Betti numbers are vastly larger than for the configuration space of $n$ ordered points in the plane, which have the factorial growth rate of $j!$, and (2) every point in the feasible region is eventually in the homological liquid regime. 
\end{abstract}

\section{Introduction}\label{sec:intro}

A typical phase diagram might look something like Figure \ref{fig:phase}. At high enough pressure, nearly any material undergoes a liquid--solid phase transition. In the physics literature, it has been suggested for decades that because these phenomena are so ubiquitous, they should have a general explanation \cite{Uhlenbeck}. One might imagine that at high pressure, attractive forces between molecules should not matter. Kirkwood posed the problem of whether a continuous system of particles with purely repulsive hard-core potential would exhibit a liquid--solid phase transition. Such phase transitions have been observed experimentally, but a theoretical description of the Kirkwood transition has remained elusive. 

\begin{figure}
    \centering
    \includegraphics[width=4in]{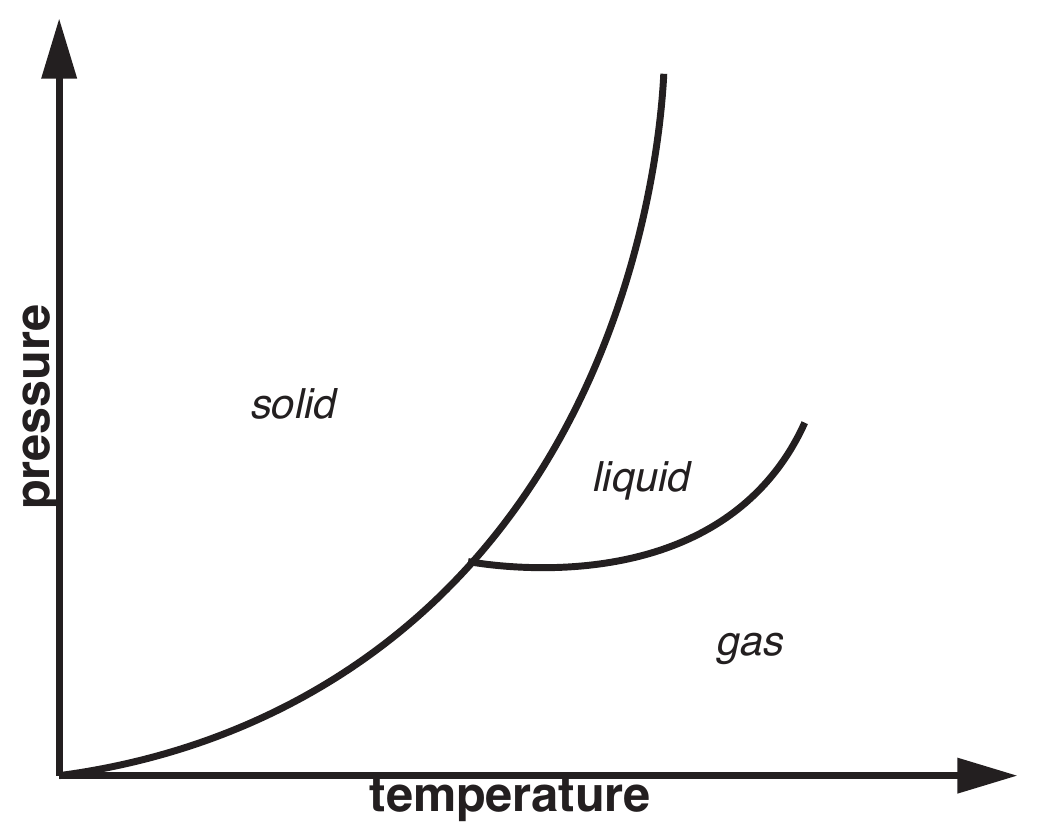}
    \caption{A typical phase diagram}
    \label{fig:phase}
\end{figure}

Motivated by such considerations, Diaconis suggested studying the topology of the configuration space of hard disks in a box in \cite{Diaconis09}.  
\begin{quote}
We know very, very little about the topology of the set $X(n, \epsilon)$ of configurations:
for fixed $n$, what are useful bounds on $\epsilon$ for the space to be connected? What are the Betti numbers? Of course, for $\epsilon$ ``small'' this set is connected but very little else is known.
\end{quote}
Motivated in part by these suggestions, in \cite{AKM19} we studied hard disks in an infinite strip. In that article, we obtained estimates for the rate of growth of the Betti numbers, and also introduced definitions of homological solid, liquid, and gas regimes. These definitions are flexible, and make sense for a wide variety of hard core models.

For a point in parameter space---that is, we specify the shapes of the particles and their container, as well as the degree of homology---we say that the point is in
\begin{itemize}
    \item the homological solid regime if homology is trivial,
    \item the homological gas regime if the inclusion map into the ambient configuration space of points induces an isomorphism on homology, and
    \item the homological liquid regime otherwise.
\end{itemize}

The homological liquid regime is the most interesting regime topologically. In \cite{AKM19}, it is shown that the Betti numbers grow exponentially fast with the number of particles. This is in sharp contrast to the homological gas regime, where they only grow polynomially fast.

In this article, we study the configuration space of $n$ hard squares in a $p \times q$ rectangle, denoted $C(n;p,q)$. A precise definition will appear below. These configuration spaces were studied topologically earlier in \cite{ABKMS20} and \cite{Plachta21}. Parallel hard squares have also been studied in the physics literature \cite{belli2012free, gonzalez2017dynamical,hoover2009single}

In \cite{ABKMS20}, we were mostly interested in understanding for which $n,j,p,q$ homology $H_j [C(n;p,q) ]$ can be nontrivial. We considered area-normalized coordinates $(x,y)$,
where $x=n/pq$ and $y=j/pq$. For each $k \geq 1$ we constructed a nontrivial element of $H_j[C(n; p, q)]$, with $p = q = 2k$, $n = 3k$, and $j = 3k-2$, so in area-normalized coordinates $(x,y) = (\frac{3}{4k}, \frac{3}{4k} - \frac{1}{2k^2})$.  We defined $S$ to be the set of all such points in the $(x,y)$-plane; that is, if we let $s_k$ denote $(\frac{3}{4k}, \frac{3}{4k} - \frac{1}{2k^2})$, then $S = \{s_k\}_{k \geq 1}$.  We also showed that if we let $S_+ := S \cup \{(0, 0), (1, 0)\}$, and if $\mathcal{R}$ is the convex hull of $S_+$, then for every point with rational coordinates $(x, y)$ in $\mathcal{R}$, there exist $j, n, p, q$ with $(\frac{n}{pq}, \frac{j}{pq}) = (x, y)$ such that $H_j[C(n; p, q)]$ is nontrivial.  Because of this, we refer to $\mathcal{R}$ as the ``feasible region'' in the area-normalized plane.

\begin{figure}
\begin{tikzpicture}[scale = 6]
\fill [opacity = 0.1] (0,0)--(1/3,1/3)--(2/3,1/3)--(1,0)--cycle;
\draw [blue, line width = 0.3mm](0,0)--(1,0)--(3/4,1/4)--( 3/8 , 1/4 )--( 1/4 , 7/36 )--( 3/16 , 5/32 )--( 3/20 , 13/100 )--( 1/8 , 1/9 )--( 3/28 , 19/196 )--( 3/32 , 11/128 )--( 1/12 , 25/324 )--( 3/40 , 7/100 )--( 3/44 , 31/484 )--
( 1/16 , 17/288 )--( 3/52 , 37/676 )--( 3/56 , 5/98 )--( 1/20 , 43/900 )--cycle;
\draw[densely dashed] plot[domain=0:9/8] (\x, {\x-(8/9)*(\x)^2});
\draw [densely dashed] (0,1/4)--(1,1/4);
\draw (0,0)--(0.7,0.7);
\node at (0.75,0.75) {$y=x$};
\draw (1,0)--(0.3,0.7);
\node at (0.25,0.75) {$y=1-x$};
\draw (0,1/3)--(1,1/3);
\node at (1.1,1/3) {$y=1/3$};
\node at (1.1,1/4) {$y=1/4$};
\node at (1.1,1/7) {$y=x-(8/9)x^2$};
\draw [line width =0.5mm,->] (0,0)--(1.25,0);
\draw [line width =0.5mm,->] (0,0)--(0,0.75);
\node at (0.625,-0.04) {$\mathbf{x=n/pq}$};
\node[rotate=90] at (-0.05,0.375) {$\mathbf{y=j/pq}$};

\fill [blue,opacity = 0.2,line width=0.01mm](0,0)--(1,0)--(3/4,1/4)--( 3/8 , 1/4 )--( 1/4 , 7/36 )--( 3/16 , 5/32 )--( 3/20 , 13/100 )--( 1/8 , 1/9 )--( 3/28 , 19/196 )--( 3/32 , 11/128 )--( 1/12 , 25/324 )--( 3/40 , 7/100 )--( 3/44 , 31/484 )--
( 1/16 , 17/288 )--( 3/52 , 37/676 )--( 3/56 , 5/98 )--( 1/20 , 43/900 )--cycle;

\end{tikzpicture}
\label{fig:summary}

\caption{It is proved in \cite{ABKMS20} that all nontrivial homology must lie in the shaded region $0 \le y \le \min\{ x, 1-x, 1/3 \}$. The blue region is what we call the feasible region $\mathcal{R}$.}
\label{fig:main}
\end{figure}


As in~\cite{ABKMS20}, we define the configuration space $C(n; p, q)$ of $n$ unit squares in the $p$ by $q$ rectangle $\left[\frac{1}{2}, p - \frac{1}{2}\right] \times \left[\frac{1}{2}, q- \frac{1}{2}\right]$ in $\mathbb{R}^2$ to be the set of all points $(x_1, y_1, \ldots, x_n, y_n) \in \mathbb{R}^{2n}$ such that
\begin{itemize}
\item $1 \leq x_k \leq p$ and $1 \leq y_k \leq q$ for all $1 \leq k \leq n$, and
\item $\abs{x_k - x_\ell} \geq 1$ or $\abs{y_k - y_\ell} \geq 1$ for all $1 \leq k < \ell \leq n$.
\end{itemize}
In other words, these are disks in a box, but with the $L^{\infty}$ norm. We refer to the sliding unit squares as ``pieces'', and refer to the unit squares centered on integer lattice points of the plane as ``grid squares''.  The coordinates $(x_k, y_k)$ specify the center of the piece labeled $k$.

We are interested in the homology groups $H_j[C(n; p, q)]$.  Specifically, for which area-normalized coordinates $(\frac{n}{pq}, \frac{j}{pq})$ can $H_j[C(n; p, q)]$ possibly be nonzero?  And, given a sequence of $j$, $n$, $p$, and $q$ such that the area-normalized coordinates $(\frac{n}{pq}, \frac{j}{pq})$ converge to a particular value, what can we say about the growth of the dimension of $H_j[C(n; p, q)]$?  (When we refer to the dimension of a homology group, we are taking homology with coefficients in $\mathbb{Q}$ or any field.  In the case of $C(n; p, q)$ we do not know whether the homology with $\mathbb{Z}$ coefficients may even be free abelian, in which case all notions of dimension would be the same.)

To describe the growth of the dimension of $H_j[C(n; p, q)]$ as the area-normalized coordinates $(\frac{n}{pq}, \frac{j}{pq})$ converge to a given point, we use the terminology of factorial growth rate, defined 
as follows.  Let $f$ be a positive function on the natural numbers.  We say that $f$ has the \textit{\textbf{factorial growth rate}} of $(an)!$ if 
\[\lim_{n \rightarrow \infty} \frac{\ln f(n)}{n \ln n} = a.\]
Similarly, we say that the factorial growth rate is at least $(an)!$ if the corresponding $\liminf$ is at least $a$, and that the factorial growth rate is at most $(an)!$ if the corresponding $\limsup$ is at most $a$.  To verify that $(an)!$ does indeed have the factorial growth rate of $(an)!$, we check $\frac{\ln ((an)!)}{n\ln n} \rightarrow a$, which is true because of Stirling's approximation,
\[\lim_{n \rightarrow \infty} \frac{\sqrt{2\pi n} (\frac{n}{e})^n}{n!} = 1.\]

The purpose of this paper is to prove the following theorem.

\begin{theorem}\label{thm:strong-lower}
Let $(x, y)$ be any point in the interior of $\mathcal{R}$.   Let $n_i, j_i, p_i, q_i$ be sequences such that $p_i \leq q_i$, $p_i \rightarrow \infty$, $\frac{n_i}{p_iq_i} \rightarrow x$, and $\frac{j_i}{p_iq_i} \rightarrow y$.  Then $\dim H_{j_i}[C(n_i; p_i, q_i)]$ has the factorial growth rate of $n_i!$.
\end{theorem}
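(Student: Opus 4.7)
The plan is to prove matching upper and lower bounds, with the main work being the lower bound. The upper bound is the easier direction: using the cell decomposition of $C(n;p,q)$ developed in \cite{ABKMS20}, the cells can be described by an unordered grid arrangement together with a permutation of the $n$ labels, so the total Betti number is bounded by $n!$ times a quantity polynomial in $p$ and $q$. Since $p_i,q_i$ grow at most polynomially in $n_i$ (because $n_i/p_iq_i\to x>0$), this polynomial factor is $o(n_i\ln n_i)$ on the logarithmic scale, giving $\limsup \ln \dim H_{j_i}/(n_i\ln n_i)\le 1$.

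For the lower bound, the strategy is to tile the $p_i\times q_i$ rectangle into disjoint blocks, put a known homology generator in each block, and use label permutations to build a factorial family of linearly independent classes. Since $(x,y)$ lies in the interior of $\mathcal{R}=\operatorname{conv}(S_+)$, one can write it as a finite convex combination $(x,y)=w_0(0,0)+w_\infty(1,0)+\sum_{k=1}^{K} w_k s_k$ with rational weights summing to $1$. Correspondingly, I tile the rectangle so that a $w_k$-fraction is covered by $2k\times 2k$ blocks (each containing $3k$ pieces in the $H_{3k-2}$-generating configuration of \cite{ABKMS20}), a $w_\infty$-fraction is covered by fully packed blocks (which contribute only $H_0$-generators distinguishing permutations of labels within the block), a $w_0$-fraction is left empty, and every block is surrounded by a unit-wide buffer strip so pieces cannot migrate between blocks. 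The subspace of $C(n_i;p_i,q_i)$ in which each piece lies in its assigned block deformation retracts onto the product of the block configuration spaces, and by the Künneth formula the cross product of the block generators is a nontrivial class in $H_{j_i}[C(n_i;p_i,q_i)]$. The parameters are matched using $\sum_k N_k(3k-2)=j_i$ and $\sum_k N_k\cdot 3k+(\text{packed count})=n_i$, which is consistent since the $y$-coordinate $(3k-2)/(4k^2)$ of $s_k$ times the $x$-coordinate denominator exactly reproduces the needed contributions.

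Varying the assignment of the $n_i$ labels across the blocks produces one such class for each partition of $\{1,\ldots, n_i\}$ into groups of the prescribed sizes; the number of partitions is the multinomial $n_i!/\prod_r n_r!$, which has factorial growth rate $n_i!$ because all block sizes $n_r$ are bounded while $n_i\to\infty$, so $\ln\prod_r n_r!=O(n_i)$ whereas $\ln n_i!\sim n_i\ln n_i$.

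The main obstacle is showing that these classes are linearly independent in $H_{j_i}[C(n_i;p_i,q_i)]$. I would address this by constructing, for each partition $\pi$, a dual homology class of complementary dimension in the open $2n_i$-manifold $C(n_i;p_i,q_i)$ (via Poincaré--Lefschetz duality) supported on configurations in which each label sits in its $\pi$-prescribed block; then the intersection pairing between a $\pi$-class and a $\pi'$-dual class vanishes for $\pi\neq\pi'$, forcing linear independence. A secondary technical point is choosing the tilings so that $(n_i/p_iq_i,\, j_i/p_iq_i)$ converges to exactly $(x,y)$: rational approximation of the weights $w_k$, together with the slack from the vanishing-area buffer strips and a small number of filler pieces, provides enough freedom to absorb the rounding errors without affecting the area-normalized limit.
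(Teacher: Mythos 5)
Your upper bound is fine and matches the paper's Lemma~\ref{lem:upper}. The lower bound, however, has a genuine gap at its center: you assert that the K\"unneth cross product of block generators ``is a nontrivial class in $H_{j_i}[C(n_i;p_i,q_i)]$,'' but K\"unneth only gives nontriviality in the homology of the product subspace $W_\pi$ (each label confined to its block); the inclusion $W_\pi \hookrightarrow C(n_i;p_i,q_i)$ can kill the class, and proving it does not is the whole difficulty. Your proposed fix via Poincar\'e--Lefschetz duality does not work as stated: $C(n_i;p_i,q_i)$ is a compact semialgebraic set with boundary and non-manifold points, not ``the open $2n_i$-manifold'' you invoke; you never construct the complementary-dimension dual classes; and the nonvanishing of the pairing of a $\pi$-class with its own dual is precisely the hard step, not a formality. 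The paper's mechanism is different and is the key idea you are missing: a globally defined map $\psi\co C(n;p,q)\to T^j$ recording the slopes of designated diagonal pairs, together with a \emph{rigidity} condition --- the designated configuration is an \emph{isolated point} of its $\psi$-fiber --- which lets one excise a small neighborhood of that configuration and localize the pulled-back cocycle to a single fiber component (Lemma~\ref{lem:permute}). That isolatedness is achieved only because the entire rectangle is packed so that nothing can move while preserving the bar slopes; in your construction the blocks can translate freely inside their empty buffer strips, so the configuration is not isolated in its fiber and the localization fails.

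The buffer strips cause a second, quantitative failure: surrounding each $2k\times 2k$ block by an empty unit-wide frame dilutes its contribution by the constant factor $(2k/(2k+1))^2$, replacing each vertex $s_k$ of $\mathcal{R}$ by a point strictly closer to the origin. The convex hull of these diluted vertices is strictly contained in $\mathcal{R}$, so interior points of $\mathcal{R}$ near the upper boundary (near the segments $[s_k,s_{k+1}]$) are unreachable by your construction. The paper avoids this by tiling the rectangle \emph{exactly} with ``fenced'' tiles whose shaded borders are part of the tile's own $(n,j,a)$ accounting, and by aggregating the $X_{4k}$ blocks into large quilts so that the border overhead is $O(1/\min(w,h))$ of the area and vanishes in the limit. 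Finally, the exact integer matching $\sum_T v(T)=(n_i,j_i,p_iq_i)$ that you dismiss as rational rounding is itself a nontrivial step (the theorem requires hitting $j_i$ and $n_i$ exactly, not asymptotically); the paper's Lemma~\ref{lem:sum} handles it by expanding the residual vector in the basis $v(\Delta_2), v(I_2), v(I_1)$ and verifying the coefficients are nonnegative integers of bounded size.
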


The main task is to prove the lower bound, that the factorial growth rate is at least $n_i!$, because the upper bound is an easy consequence of our theorem from~\cite{ABKMS20} showing that $C(n; p, q)$ is homotopy equivalent to a certain cubical complex. We can estimate the number of cells in this complex for an upper bound on the Betti numbers.

\begin{lemma}\label{lem:upper}
Let $n_i, j_i, p_i, q_i$ be sequences such that $n_i \rightarrow \infty$ and $p_iq_i \leq M\cdot n_i$ for some constant $M$.  Then $\dim H_{j_i}[C(n_i; p_i, q_i)]$ has the factorial growth rate of at most $n_i!$.
\end{lemma}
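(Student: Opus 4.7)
The plan is to invoke the cubical complex model of $C(n;p,q)$ constructed in \cite{ABKMS20} and simply count its cells. As a first step, I would recall that \cite{ABKMS20} exhibits a cubical complex $X(n;p,q)$ homotopy equivalent to $C(n;p,q)$, whose cells are indexed by a discrete combinatorial datum that, in particular, specifies for each of the $n$ labeled pieces a location in the $p \times q$ grid (with some bounded amount of extra local data per piece encoding the cube directions). This gives a crude upper bound on the total number of cells of the form
\[
\#\,\text{cells of } X(n_i;p_i,q_i) \;\le\; C_1\,(p_iq_i)^{n_i}\cdot \mathrm{poly}(n_i),
\]
where the polynomial factor absorbs the bounded per-piece local data. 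Since $\dim H_{j_i}[C(n_i;p_i,q_i)]$ is at most the number of $j_i$-dimensional cells of this complex, the same upper bound applies to the Betti number.

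Next, I would plug in the hypothesis $p_iq_i \le Mn_i$ to obtain
\[
\dim H_{j_i}[C(n_i;p_i,q_i)] \;\le\; C_1\,(Mn_i)^{n_i}\cdot \mathrm{poly}(n_i).
\]
Taking logarithms gives
\[
\ln \dim H_{j_i}[C(n_i;p_i,q_i)] \;\le\; n_i \ln n_i + n_i \ln M + O(\ln n_i),
\]
and dividing by $n_i \ln n_i$ yields
\[
\frac{\ln \dim H_{j_i}[C(n_i;p_i,q_i)]}{n_i \ln n_i} \;\le\; 1 + \frac{\ln M}{\ln n_i} + o(1),
\]
whose $\limsup$ as $i\to\infty$ is at most $1$. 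By the definition of factorial growth rate recalled above, this is exactly the assertion that the growth rate is at most $n_i!$.

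The only real obstacle is verifying that the cell count of $X(n;p,q)$ from \cite{ABKMS20} really is $(pq)^{O(n)}$ rather than something larger. The authors describe this upper bound as ``an easy consequence'' of the cubical complex model, which strongly suggests the cells are indexed essentially by placements of the $n$ labeled pieces on grid positions together with a bounded amount of local data per piece, so the required bound is immediate. If a direct count of all cells overshoots due to higher-dimensional cubes, one can equivalently count the top-dimensional cells (whose number still grows like $(pq)^{O(n)}$) and appeal to the bound $\dim H_{j_i} \le \dim H_* \le \#\,\text{top cells}\cdot 2^{\dim}$, with $\dim \le 2n_i$. Either way, in the regime $pq \le Mn$ the resulting factorial growth rate is at most $n_i!$.
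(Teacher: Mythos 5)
Your proof takes essentially the same route as the paper: bound $\dim H_{j_i}$ by the number of $j_i$-cells of the cubical model from \cite{ABKMS20} (the paper counts cells in the ambient complex $G(n_i;p_i,q_i)=\left[\frac{1}{2},p_i+\frac{1}{2}\right]^{n_i}\times\left[\frac{1}{2},q_i+\frac{1}{2}\right]^{n_i}$, getting at most $2^{2n_i}(p_i+1)^{n_i}(q_i+1)^{n_i}$ cells of dimension $j_i$), then take logarithms and divide by $n_i\ln n_i$. The only small inaccuracy is that the per-piece ``local data'' (which coordinates vary) contributes an exponential factor of the form $C^{n_i}$ rather than $\mathrm{poly}(n_i)$, but this is harmless since $n_i\ln C/(n_i\ln n_i)\to 0$.
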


\begin{proof}
In~\cite{ABKMS20} we show that $C(n_i; p_i, q_i)$ is homotopy equivalent to a cubical complex $X(n_i; p_i, q_i)$, which is a subcomplex of the following cubical complex $G(n_i; p_i, q_i)$.  Let $[\frac{1}{2}, \ell + \frac{1}{2}]$ denote the complex consisting of an interval subdivided into $\ell$ $1$-cells and $\ell + 1$ $0$-cells.  Then we define 
\[G(n_i; p_i, q_i) = \left[\frac{1}{2}, p_i + \frac{1}{2}\right]^{n_i}\times\left[\frac{1}{2}, q_i + \frac{1}{2}\right]^{n_i}.\]
Thus, $\dim H_{j_i}[C(n_i; p_i, q_i)]$ is at most the number of $j_i$-cells in $G(n_i; p_i, q_i)$.

To estimate the number of $j_i$-cells in $G(n_i; p_i, q_i)$, there are $\binom{2n_i}{j_i} \leq 2^{2n_i}$ ways to select which of the coordinates are fixed and which are varying.  Then, for each of the first $n_i$ coordinates, there are $p_i + 1$ choices for its value if it is fixed, and $p_i$ choices for its interval of values if it is varying; for the remaining $n_i$ coordinates, there are $q_i + 1$ choices if fixed, and $q_i$ if varying.  Thus, the total number of $j_i$-cells in $G(n_i; p_i, q_i)$ is bounded by $2^{2n_i}(p_i + 1)^{n_i}(q_i + 1)^{n_i}$,
which for sufficiently large $i$ is at most $2^{2n_i}((M + 1)n_i)^{n_i} = (4(M+1)n_i)^{n_i}$, and this upper bound has the factorial growth rate of $n_i!$.
\end{proof}

In Theorem~\ref{thm:strong-lower} the growth rate of $\dim H_{j_i}[C(n_i; p_i, q_i)]$ is much faster than it would be if $p_i$ and $q_i$ were larger, for instance if they were as large as $n_i$.  The configuration space $C(n; n, n)$ is homotopy equivalent to the configuration space of $n$ ordered distinct points in the plane~\cite{AKM19}.  We denote this space by $C(n)$ and show in the next proposition that when $j$ and $n$ grow in constant ratio, then $\dim H_j[C(n)]$ only grows like $j!$, not like $n!$.

\begin{proposition}
Let $n_i$ and $j_i$ be sequences such that $j_i \rightarrow \infty$ and $j_i \leq \alpha \cdot n_i$ for some constant $\alpha < 1$.  Then $\dim H_{j_i}[C(n_i)]$ has the factorial growth rate of $j_i!$.
\end{proposition}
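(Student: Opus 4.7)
The plan is to identify $\dim H_j[C(n)]$ with a classical combinatorial quantity and then sandwich its logarithm. By Arnold's calculation of the rational cohomology of the ordered configuration space of $n$ points in the plane, the Poincar\'e polynomial of $C(n)$ is $\prod_{k=1}^{n-1}(1+kt)$, so
\[
\dim H_j[C(n)] \;=\; e_j(1, 2, \ldots, n-1) \;=\; c(n, n-j),
\]
the $j$-th elementary symmetric polynomial in $\{1, \ldots, n-1\}$, equivalently the unsigned Stirling number of the first kind counting permutations of $[n]$ with exactly $n-j$ cycles. It then suffices to show $\ln c(n_i, n_i - j_i) \sim j_i \ln j_i$ along the given sequences.

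For the lower bound I would count the permutations of $[n]$ consisting of a single $(j+1)$-cycle on a chosen $(j+1)$-element subset together with $n-j-1$ fixed points: each such permutation has exactly $n-j$ cycles, and there are $\binom{n}{j+1} \cdot j!$ of them, so $\dim H_{j_i}[C(n_i)] \geq j_i!$, and Stirling gives $\liminf_i \ln \dim H_{j_i}[C(n_i)] / (j_i \ln j_i) \geq 1$. For the upper bound, each of the $\binom{n-1}{j}$ summands of $e_j(1,\ldots,n-1)$ is a product of $j$ distinct positive integers at most $n-1$, so $\dim H_j[C(n)] \leq \binom{n-1}{j} (n-1)^j$. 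Applying $\binom{n-1}{j} \leq (en/j)^j$ and taking logarithms gives the bound $j(1 + 2 \ln n - \ln j)$; under the hypothesis $j_i \leq \alpha n_i$, interpreted (as the introduction's informal description suggests) in the ``constant ratio'' regime where $\ln n_i \sim \ln j_i$, this simplifies to $j_i \ln j_i (1 + o(1))$, yielding $\limsup \leq 1$ and hence the proposition.

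The main obstacle is calibrating the upper bound: the crude binomial estimate leaves a leading $2 j \ln n$ term, and collapsing it to $j \ln j$ uses $\ln n_i \sim \ln j_i$, i.e.\ $j_i$ and $n_i$ polynomially comparable---the intended regime for the contrast with Theorem~\ref{thm:strong-lower}. If $j_i$ were polynomially much smaller than $n_i$, a sharper coefficient estimate (e.g.\ a saddle-point analysis of $[t^{j_i}] \prod_{k=1}^{n_i-1}(1+kt)$) would be needed in place of the binomial bound.
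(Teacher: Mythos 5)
Your argument is essentially the paper's: Arnold's Poincar\'e polynomial identifies $\dim H_j[C(n)]$ with $e_j(1,\ldots,n-1)$, the lower bound $j!$ is a single summand (your count of permutations with one $(j+1)$-cycle and $n-j-1$ fixed points is just the Stirling-number phrasing of this), and the upper bound is (number of summands)$\times$(largest summand). The caveat you flag in your last paragraph is genuine, though, and worth dwelling on: the upper bound leaves a $j_i\ln n_i$ term, and the literal hypothesis $j_i\le\alpha n_i$ gives no control of $\ln n_i$ in terms of $\ln j_i$ --- indeed for $j_i=\sqrt{n_i}$ the single largest summand $(n_i-1)\cdots(n_i-j_i)$ already has $\log$ asymptotic to $2j_i\ln j_i$, so the stated conclusion fails in that regime. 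The paper's own proof has the same issue: its final inequality uses $\ln n_i\le\ln\alpha+\ln j_i$ and $n_i\ln 2\le\alpha j_i\ln 2$, i.e.\ $n_i\le\alpha j_i$, which is the reverse of the stated hypothesis (the hypothesis as written is what makes the \emph{lower} bound a genuine summand, via $j_i\le n_i-1$). Both proofs really need $j_i$ and $n_i$ comparable --- say $\beta n_i\le j_i\le\alpha n_i$ --- which is what the surrounding text (``$j$ and $n$ grow in constant ratio'') intends; under that reading your proof is complete and matches the paper's.
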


\begin{proof}
Arnold~\cite{Arnold14} showed that $H_*[C(n)]$ and $H^*[C(n)]$ are free abelian groups and that the Poincar\'e polynomial of $C(n)$ is given by
\[\beta_0 + \beta_1 t + \beta_2 t^2 + \cdots + \beta_{n-1}t^{n-1} = 1(1+t)(1+2t)\cdots(1+(n-1)t),\]
where $\beta_j$ denotes $\dim H_j[C(n)]$.  Thus we have
\[\dim H_j[C(n)] = \sum_{\substack{S \subseteq \{1, \ldots, n-1\}\\ \abs{S} = j}} \prod_{i \in S}i.\]
This sum is bounded below by $j!$, which is one of the summands.  It is bounded above by the number of summands times the greatest summand, which is
\[\binom{n-1}{j} \cdot (n-1)(n-2)\cdots(n-j) \leq 2^n \cdot n^j.\]
Thus we have
\[\frac{\ln(\dim H_{j_i}[C(n_i)])}{j_i \ln j_i} \leq \frac{n_i \ln 2 + j_i \ln n_i}{j_i\ln j_i} \leq \frac{\alpha j_i \ln 2 + j_i(\ln \alpha + \ln j_i)}{j_i \ln j_i},\]
which approaches $1$ as $j_i \rightarrow \infty$.
\end{proof}

In Section~\ref{sec:warmup} we prove an easier special case of the main theorem, as a way of introducing the concepts that go into the main proof.  In Section~\ref{sec:whytile} we define ``rigid cycle tile'', and show that the main theorem can be reduced to the task of constructing a large rigid cycle tile made out of smaller rigid cycle tiles.  In Section~\ref{sec:maketile} we construct the smaller rigid cycle tiles that are the building blocks of our main proof, and in Section~\ref{sec:proofs} we prove the main theorem using two lemmas, showing that we can find a suitable collection of smaller tiles and assemble them together.  Section~\ref{sec:conclusion} gives some questions and directions for future work.

\section{Warm-up theorem}\label{sec:warmup}

In this section, we give an informal proof of the following special case of our main theorem, in order to give an idea of the reasons for the main theorem.

\begin{theorem}\label{thm:warmup}
Let $p$ and $q$ be even numbers, let $j = \frac{pq}{4}$, and let $n = 2j$.  Then we have
\[\dim H_j[C(n; p, q)] \geq \frac{n!}{2^j}.\]
\end{theorem}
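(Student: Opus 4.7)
The plan is to construct $n!/2^j$ explicit $j$-cycles in $C(n;p,q)$ indexed by ways of distributing the $n$ labeled pieces among $j$ designated $2 \times 2$ sub-blocks, then verify they are linearly independent in $H_j$.

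First I would tile the $p \times q$ rectangle by $j$ disjoint $2\times 2$ blocks. For each block in isolation, $C(2;2,2)$ is homotopy equivalent to $S^1$: the two unit squares exchange places by a rotation cycle that traverses the four configurations in which one sits to the left, right, above, or below the other. Then for each surjection $\sigma\co\{1,\ldots,n\} \to \{B_1, \ldots, B_j\}$ with $\abs{\sigma^{-1}(B_l)}=2$ for each $l$ (there are $n!/2^j$ such $\sigma$), the locus $Y_\sigma \subset C(n;p,q)$ of configurations with piece $k$ in block $\sigma(k)$ is canonically a product $\prod_l C(2;2,2) \hequiv T^j$, contributing a candidate fundamental class $z_\sigma \in H_j[C(n;p,q)]$. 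The subspaces $Y_\sigma$ are pairwise disjoint inside $C(n;p,q)$, since a configuration in $Y_\sigma \cap Y_{\sigma'}$ would force some piece into two different blocks.

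To show the $n!/2^j$ classes $z_\sigma$ are linearly independent, I would work in the cellular model $X(n;p,q)$ from~\cite{ABKMS20}. A chain-level representative of $z_\sigma$ is supported on $j$-cells of $X(n;p,q)$ that record exactly which two labels occupy each block, so the supports of $z_\sigma$ and $z_{\sigma'}$ are disjoint whenever $\sigma \neq \sigma'$. The goal is then to exhibit, for each $\sigma$, a $j$-cocycle $\xi_\sigma$ dual to a distinguished cell in the support of $z_\sigma$, yielding $\langle \xi_\sigma, z_{\sigma'} \rangle = \pm \delta_{\sigma, \sigma'}$ and hence linear independence.

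The main obstacle is verifying that $\xi_\sigma$ is closed on the full complex $X(n;p,q)$ rather than only on the subcomplex carrying $Y_\sigma$. A $(j+1)$-cell adjacent to the distinguished cell would correspond to some piece's coordinate becoming free, allowing a partial escape from its designated block. I would handle this by choosing the distinguished cell so that the two possible ``escape directions'' contribute canceling coboundary terms, exploiting the geometric rigidity that the generator of $H_1[C(2;2,2)]$ cannot be deformed away without forcing an illegal collision. This rigidity calculation should serve as the prototype for the ``rigid cycle tile'' technology developed in Section~\ref{sec:whytile}.
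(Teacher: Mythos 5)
Your family of cycles is the same as the paper's (pairs of labels assigned to the $j$ disjoint $2\times 2$ blocks, each pair orbiting to sweep out a torus $T^j$), and you correctly identify the crux: disjointness of the supports of the $z_\sigma$ proves nothing by itself, and one must produce genuinely closed dual cocycles. But the proposal stops exactly at that crux. The mechanism you sketch --- take the indicator of a single distinguished $j$-cell and hope that the two ``escape directions'' contribute canceling coboundary terms --- cannot work as stated. If $\xi_\sigma$ is supported on a single cell $e_0$, then for each $(j{+}1)$-cell $f$ the value $\delta\xi_\sigma(f)$ is the single incidence number $[f:e_0]$; contributions from different cofacial cells $f$ are values on different cells and cannot cancel one another. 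Moreover, in the warm-up configuration every $j$-cell in the support of $z_\sigma$ \emph{is} a proper face of some $(j{+}1)$-cell: in each block one piece is fixed while the other slides, and the fixed piece can always be freed to slide along the parallel edge (the two pieces then stay at vertical or horizontal distance at least $1$ from each other and from the neighboring blocks). So no single-cell indicator is closed. To repair this you must enlarge the support of $\xi_\sigma$, and then you must re-verify that the enlarged support still avoids the supports of all the other $z_{\sigma'}$ --- and that verification is precisely the hard content you have deferred to ``rigidity.''

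The paper resolves this differently. It defines $\psi\co C(n;p,q)\to T^j$ by recording the normalized vector from the lower-labeled to the higher-labeled piece of each pair, chooses the configuration $c$ in which every pair sits diagonally with slopes alternating between $+1$ and $-1$ across the board, and observes that $c$ is an \emph{isolated point} of its fiber $\psi^{-1}(\psi(c))$ inside $C(n;p,q)$: no perturbation of the $n$ pieces preserves all $j$ slopes simultaneously. The dual functional is then the pullback $\psi^*[T^j]$ localized to the component of $\psi^{-1}(B_\varepsilon(\psi(c)))$ containing $c$, made precise (in Lemma~\ref{lem:permute}) via the composite $H_j(C)\to H_j(C,C\setminus U)\to H_j(T^j,T^j\setminus B_\varepsilon(\psi(c)))\cong\mathbb{Z}$. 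The rigidity of $c$ is what allows one to choose $\varepsilon$ so that $U$ meets no other component of the fiber and no other torus $\sigma\circ\phi(T^j)$, which yields the pairing $\pm\delta_{\sigma,\sigma'}$. This localization-plus-rigidity step is the actual content of the theorem, and it is missing from your argument.
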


\begin{figure}


    \centering
    \figwarmup
    \caption{Each way to assign the labels $1$ through $n$ in pairs to the $2$ by $2$ squares results in another cycle, and these cycles are linearly independent.  To show this, we pair the cycles with a corresponding set of cocycles.}
    \label{fig:warmup}
\end{figure}

\subsection{Maps to and from the $j$-torus}
We construct a class in $H_j[C(n; p, q)]$ according to Figure~\ref{fig:warmup} in the following way.  The $p$ by $q$ rectangle is divided into $2$ by $2$ squares, each containing one pair of pieces.  Each pair of pieces can orbit each other within their $2$ by $2$ square, creating one loop's worth of configurations.  The $j$ pairs can move independently, so the resulting motion can be parametrized by a $j$-dimensional torus $T^j$.  In other words, we have described an embedding $\phi\co T^j \rightarrow C(n; p, q)$.  Because $T^j$ is an orientable closed manifold, it has a fundamental homology class, and $\phi$ pushes forward this class to give $\phi_*[T^j] \in H_j[C(n; p, q)]$.

To show that this homology class is nontrivial, we exhibit a cohomology class in $H^j[C(n; p, q)]$ that pairs nontrivially with it.  We consider the vector from the center of piece $1$ to the center of piece $2$, the vector from $3$ to $4$, and so on, for a total of $j$ nonzero vectors.  These vectors are drawn as arrows in Figure~\ref{fig:warmup}.  We normalize these vectors to unit length to obtain a map $\psi \co C(n; p, q) \rightarrow T^j$.  Pulling back the fundamental cohomology class of $T^j$ gives the cohomology class $\psi^*[T^j] \in H^j[C(n; p, q)]$.  The composition $\psi \circ \phi \co T^j \rightarrow T^j$ is the identity map and in particular has degree $1$, so the pairing of $\phi_*[T^j]$ with $\psi^*[T^j]$ evaluates to $1$, proving that both classes are nontrivial.

\subsection{Permuting the labels}
Each permutation $\sigma \in S_n$ acts on $C(n; p, q)$ by permuting the labels of the pieces.  Thus, the maps $\phi$ and $\psi$ give rise to maps $\sigma \circ \phi$ and $\psi \circ \sigma^{-1}$, which correspond to homology and cohomology classes of their own.  However, not all of these classes are different.  Swapping labels $1$ and $2$, for instance, results in reparametrizing their circle coordinate in $\phi$ by half a turn, giving a map homotopic to $\phi$.  Thus, we restrict our attention to the permutations $\sigma \in S^n$ that preserve the order of labels on each of our $j$ pairs; that is, $\sigma(1) < \sigma(2)$, $\sigma(3) < \sigma(4)$, and so on.  The number of such permutations is $\frac{n!}{2^j}$.  We claim that the resulting homology classes $(\sigma \circ \phi)^*[T^j]$ are linearly independent.

To prove the linear independence, for each homology class in our collection we would like to exhibit a corresponding cohomology class, in such a way that the pairing of the corresponding classes evaluates to $1$ and the pairing of non-corresponding classes evaluates to $0$.  Unfortunately, the cohomology classes $(\psi \circ \sigma^{-1})^*[T^j]$ do not have this property.  For instance, if $\sigma$ is the permutation that swaps labels $1$ and $3$ and swaps labels $2$ and $4$, then $\psi \circ \sigma^{-1}$ differs from $\psi$ only by the reparametrization of $T^j$ that swaps the first two coordinates.  This implies that $(\psi \circ \sigma^{-1})^*[T^j]$ pairs nontrivially with $\phi_*[T^j]$.  Thus, we need a more refined method of constructing the corresponding cohomology class for each homology class in our collection.
 
\subsection{Subdividing cocycles}
Another way to think of the cohomology class $\psi^*[T^j]$ is in terms of intersection number.  The map $\psi$ can be defined on the configuration space of $n$ ordered distinct points in the plane, which we denote by $C(n)$.  Abusing notation, we use $\psi$ to refer both to the map on $C(n; p, q)$ and to the map on $C(n)$.  We note that $C(n; p, q)$ is a subspace of $C(n)$ and that $C(n)$ is an open manifold.  Selecting any point in $T^j$, we may consider its preimage in $C(n)$ under $\psi$, to get a submanifold $V$ of codimension $j$.  Given any $j$-dimensional cycle in $C(n)$, we may pair it with $\psi^*[T^j]$ by taking its intersection number with $V$.

In the case where the intersection of $V$ with $C(n; p, q)$ is disconnected, for any $j$-dimensional cycle in $C(n; p, q)$, its intersection number with $V$ is the sum of the contributions from the various connected components.  Thus, the pullback of the cohomology class to $C(n; p, q)$ is a sum of classes, one for each connected component.  (If this claim seems insufficiently justified, do not worry!  The present section is meant to be informal, and there is a more rigorous treatment in Section~\ref{sec:whytile}.)  We will use this observation to find a class that, like $\psi^*[T^j]$, gives $1$ when paired with $\phi^*[T^j]$, but unlike $\psi^*[T^j]$, gives $0$ when paired with the other homology classes in our collection.  This class corresponds to one component of $V \cap C(n; p, q)$.

Specifically, the point we select in $T^j$ is the one corresponding to the configuration in Figure~\ref{fig:warmup}, with coordinates that alternate between $\frac{\pi}{4}$ and $-\frac{\pi}{4}$.  The component of $V$ we select is the one containing this configuration in the figure.  The important property of this choice is that this component of $V \cap C(n; p, q)$ contains only this one configuration; that is, there is no way to perturb the pieces while maintaining the slopes of the $j$ arrows.  Moreover, we observe that none of the images of the other tori $\sigma \circ \phi$ contain this configuration.  This implies that the pairing of our new cohomology class with each of the other homology classes in our collection is $0$, while its pairing with $\phi^*[T^j]$ is $1$.

Repeating this process, we obtain one cohomology class in $H^j[C(n; p, q)]$ for each of our $\frac{n!}{2^j}$ homology classes.  These classes pair perfectly, so the homology classes are linearly independent (and so are the cohomology classes).

\section{Permutation families of independent cycles}\label{sec:whytile}

To prove the main theorem, we construct pictures similar to Figure~\ref{fig:warmup}.   The purpose of this section is to make precise what comprises such a picture, and to show that drawing such a picture implies the desired lower bound on homology.

We define a \textit{\textbf{rigid stress tile}} for some $n, j, p, q$ to be a drawing of $n$ shaded grid squares in a $p$ by $q$ rectangle, with bars connecting $j$ pairs of diagonally adjacent shaded grid squares.  We require, informally, that there is no way to perturb the $n$ pieces while preserving the slopes of the bars.  More formally, we label the shaded squares $1$ through $n$, and consider the map $\psi \co C(n; p, q) \rightarrow T^j$ that records, for each bar between two squares, the unit vector from the center of the lesser-numbered square toward the center of the greater-numbered square.  To be a rigid stress tile, we require the configuration in our drawing to be an isolated point in its fiber under this map $\psi$.

We define a \textit{\textbf{rigid cycle tile}} to be a rigid stress tile together with a map $\phi \co T^j \rightarrow C(n; p, q)$ such that the map $\psi \circ \phi \co T^j \rightarrow T^j$ is a homeomorphism.  If the rigid cycle tile is called $T$, then the corresponding values of $n, j, p, q$ are denoted by $n(T), j(T), p(T), q(T)$.  We let $a(T)$ denote the area $p(T)q(T)$, and we let $v(T)$ denote the vector $(n(T), j(T), a(T))$.  We abbreviate the area-normalized coordinates $\left(\frac{n(T)}{a(T)}, \frac{j(T)}{a(T)}\right)$ by $(\frac{n}{a}, \frac{j}{a})(T)$.  In Theorem~\ref{thm:warmup} we constructed a large rigid cycle tile from many $2$ by $2$ tiles, and wanted to count the permutations that preserved the order within each of the smaller tiles.  For the general case, we refer to these smaller tiles as the \textit{\textbf{clusters}} of the rigid cycle tile, defined as follows.  We generate an equivalence relation by specifying that two of the $n(T)$ pieces are equivalent if their movements under $\phi$ intersect; that is, there are two configurations in the image of $\phi$ such that the first piece in the first configuration and the second piece in the second configuration have points of the rectangle in common.  The equivalence classes of the resulting equivalence relation are the clusters.  If two pieces are in different clusters, then under $\phi$ they stay in disjoint parts of the rectangle.

The following lemma shows that constructing a rigid cycle tile is enough to give a lower bound on homology.

\begin{lemma}\label{lem:permute}
Let $T$ be a rigid cycle tile, with clusters of size $n_\ell$ with $\sum_\ell n_\ell = n(T)$.  Then we have
\[\dim H_{j(T)}[C(n(T); p(T), q(T))] \geq \frac{n(T)!}{\prod_\ell n_\ell!}.\]
\end{lemma}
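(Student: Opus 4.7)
The plan follows the strategy of Section~\ref{sec:warmup}, with pairs replaced by general clusters. For each $\sigma \in S_{n(T)}$, let $\alpha_\sigma := (\sigma \circ \phi)_*[T^{j(T)}] \in H_{j(T)}[C(n(T);p(T),q(T))]$, where $\sigma$ acts on configurations by relabeling pieces. Within-cluster permutations $H := S_{n_1} \times \cdots \times S_{n_k}$ reparametrize individual cluster motions without changing the image $\phi(T^{j(T)})$, so $\alpha_\sigma$ depends only on the coset $\sigma H$, yielding $\frac{n(T)!}{\prod_\ell n_\ell!}$ candidate classes. The goal is to prove these classes are linearly independent.

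For each coset representative $\sigma$, I would construct a dual cohomology class $\gamma_\sigma \in H^{j(T)}[C(n(T);p(T),q(T))]$ as follows. Working in the ambient open manifold $C(n(T))$ of $n(T)$ ordered distinct points in the plane, set $\psi_\sigma := \psi \circ \sigma^{-1}$. By Poincar\'e--Lefschetz duality, the pullback $\psi_\sigma^*[T^{j(T)}]$ is represented by the properly embedded, cooriented, codimension-$j(T)$ fiber $\psi_\sigma^{-1}(\psi_\sigma(\sigma c^*))$, where $c^*$ denotes the base configuration drawn in the tile. Restricting to the open submanifold $C(n(T);p(T),q(T))$, this fiber disconnects, and the cohomology class correspondingly decomposes as a sum of contributions from its connected components. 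The rigid stress tile condition forces $c^*$ to be isolated in $\psi^{-1}(\psi(c^*))$, and transporting by $\sigma$ shows that $\sigma c^*$ is isolated in $\psi_\sigma^{-1}(\psi_\sigma(\sigma c^*))$; since a connected set containing an isolated point is a single point, the component containing $\sigma c^*$ is $\{\sigma c^*\}$, and I take $\gamma_\sigma$ to be its summand.

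Then the pairing $\langle \gamma_\sigma, \alpha_\tau\rangle$ is the signed count of $y \in T^{j(T)}$ with $(\tau \circ \phi)(y) = \sigma c^*$. If $\tau$ and $\sigma$ lie in the same coset of $H$, there is a unique such $y$ (the within-cluster rearrangement of the base), and the sign is $\pm 1$ because $\psi \circ \phi$ is a homeomorphism; so the diagonal entries of the pairing matrix are $\pm 1$. If $\tau$ and $\sigma$ lie in different cosets, then $(\sigma \circ \phi)(T^{j(T)})$ and $(\tau \circ \phi)(T^{j(T)})$ are actually disjoint: by definition of clusters the pieces of each cluster move in a region disjoint from the other clusters' regions, so in any configuration of $(\sigma \circ \phi)(T^{j(T)})$ the assignment of labels to cluster-regions is determined by $\sigma H$, and different cosets force incompatible assignments; in particular $\sigma c^* \notin (\tau \circ \phi)(T^{j(T)})$, so the off-diagonal pairing vanishes. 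The resulting diagonal matrix with $\pm 1$ entries establishes linear independence of the $\alpha_\sigma$ and proves the lemma.

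The main obstacle is justifying the component-by-component decomposition of $\psi_\sigma^*[T^{j(T)}]|_{C(n(T);p(T),q(T))}$: this is a Poincar\'e--Lefschetz duality statement on the non-compact ambient $C(n(T))$, and requires treating each connected component of the restricted fiber as a properly embedded cooriented cycle in locally finite homology. The rigid stress tile condition is precisely what makes the component of interest the compact point $\{\sigma c^*\}$, which keeps this verification concrete; once the decomposition is in hand, the remainder of the proof reduces to the essentially combinatorial cluster-region observation.
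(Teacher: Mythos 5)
Your overall strategy---the classes $(\sigma\circ\phi)_*[T^j]$ indexed by coset representatives, dual classes localized at the rigid configuration, and a diagonal pairing matrix with $\pm1$ entries---is the same as the paper's, and your combinatorial argument that distinct cosets give disjoint tori is fine (the paper only needs the weaker statement that the other tori miss a neighborhood of $c$). But the step you flag as "the main obstacle" is in fact a genuine gap, and the route you propose for closing it does not work as stated. You write that you will restrict the Poincar\'e--Lefschetz dual of $\psi_\sigma^*[T^j]$ "to the open submanifold $C(n;p,q)$." The space $C(n;p,q)$ is not an open submanifold of $C(n)$: it is a compact subset cut out by non-strict inequalities, it is not a manifold (the rigid configuration $\sigma c^*$ lies exactly on the strata where pieces touch each other and the walls), and the fiber $\psi_\sigma^{-1}(x)\cap C(n;p,q)$ is not a properly embedded cooriented submanifold of it. So neither locally finite Poincar\'e duality nor the "sum over components of the restricted fiber" decomposition applies off the shelf, and the definition of the summand $\gamma_\sigma$ is exactly what remains to be supplied. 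Note also that your preliminary claim that within-cluster permutations leave the image of $\phi$ unchanged is not guaranteed by the definition of a rigid cycle tile; it is harmless (you only need one representative per coset), but it should not be asserted.

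The paper closes this gap without any duality: it defines the functional directly as the composite
\[H_j(C)\rightarrow H_j(C, C\setminus U)\cong H_j(C\setminus V, C\setminus(U\cup V))\rightarrow H_j\bigl(T^j, T^j\setminus B_\varepsilon(x)\bigr)\cong\mathbb{Z},\]
where $U$ is the connected component of $\psi^{-1}(B_\varepsilon(x))$ containing $c$, $V$ is the rest of that preimage, the middle isomorphism is excision, and the last map is induced by $\psi$. The only analytic input is a point-set argument choosing $\varepsilon$ small enough (using compactness of $\psi^{-1}(x)\setminus c$ and of the union of the other tori) that $U$ is separated from the rest of the fiber and from the images of $\sigma\circ\phi$ for $\sigma\neq\mathrm{id}$; then $f([\phi])=\pm1$ because $\phi(T^j)$ misses $V$, and $f([\sigma\circ\phi])=0$ because $\sigma\circ\phi(T^j)$ misses $U$. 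A single functional $f$ plus the $S_n$-action then produces the whole dual family. If you want to salvage your write-up, replace the duality/locally-finite-homology discussion with this relative-homology construction; it is the rigorous version of "taking the contribution of the component $\{\sigma c^*\}$" and requires no manifold structure on $C(n;p,q)$.
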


\begin{proof}
Let $n=n(T)$, $j=j(T)$, $p=p(T)$, and $q=q(T)$.  Let $c \in C(n; p, q)$ be the configuration designated by $T$.  The group $S_n$ acts on $C(n; p, q)$ by permuting the labels of the $n$ pieces.  Let $E$ be the subset of $S_n$ consisting of all permutations that are order-preserving on each cluster of $c$.  We have
\[\abs{E} = \frac{n!}{\prod_\ell n_\ell!}.\]
For each permutation $\sigma \in E$, the map $\sigma \circ \phi \co T^j \rightarrow C(n; p, q)$ gives a homology class $[\sigma \circ \phi] = (\sigma \circ \phi)_*[T^j] \in H_j[C(n; p, q)]$.  To prove the lemma, it suffices to show that these homology classes are linearly independent.  In what follows, we define a linear functional $f \co H_j[C(n; p, q)] \rightarrow \mathbb{Z}$ such that $f([\phi])$ is nonzero, but $f([\sigma \circ \phi]) = 0$ for all $\sigma \in E$.  Then the functionals $f \circ \sigma^{-1}$ give a dual basis to our set of homology classes $[\sigma \circ \phi]$, which then must be linearly independent.

Let $x = \psi(c) \in T^j$.  For a sufficiently small $\varepsilon$ that we choose later, we take the ball $B_{\varepsilon}(x)$ around $x$ in $T^j$, let $U$ be the component of $\psi^{-1}(B_\varepsilon(x))$ in $C(n; p, q)$ that contains $c$, and let $V$ be the remainder of $\psi^{-1}(B_\varepsilon(x))$.  Then, letting $C = C(n; p, q)$ for brevity, we choose the functional $f$ to be the composite map
\[H_j(C) \rightarrow H_j(C, C\setminus U) \cong H_j(C\setminus V, C\setminus(U\cup V)) \rightarrow H_j(T^j, T^j \setminus B_\varepsilon(x)) \cong \mathbb{Z},\]
where the isomorphism in the second map is by excision, and the third map is induced by $\psi$.

We choose $\varepsilon$ as follows.  The definition of rigid stress tile implies that $\psi^{-1}(x) \setminus c$ is a closed, hence compact, subset of $C(n; p, q)$.  Also, for each $\sigma \in E$ other than the identity, the image of $\sigma \circ \phi$ in $C(n; p, q)$ does not contain $c$, because the only element of $E$ that keeps every piece within its original cluster is the identity permutation.  Thus the set
\[K = (\psi^{-1}(x) \setminus c) \cup \bigcup_{1 \neq \sigma \in E} \sigma\circ\phi(T^j)\]
has some positive distance $\delta$ from $c$ in $C(n; p, q)$, if we use the subspace metric inherited from $\mathbb{R}^{2n}$.  We would like to select $\varepsilon$ in such a way that the resulting neighborhood $U$ of $c$ is disjoint from $K$.  To do this, we observe that the set $K' = C(n; p, q) \setminus B_{\delta/3}(\psi^{-1}(x))$ is compact, and $\psi(K')$ does not contain $x$, so we may choose $\varepsilon$ small enough that $B_{\varepsilon}(x)$ is disjoint from $\psi(K')$.  This then implies that if we let $U$ be the connected component of $\psi^{-1}(B_{\varepsilon}(x))$ that contains $c$, then $U$ does not contain any other points of $\psi^{-1}(x)$ or of the images of $\sigma \circ \phi$, for non-identity permutations $\sigma\in E$.

To show that $f([\phi]) \neq 0$, we use the fact that $\psi \circ \phi \co T^j \rightarrow T^j$ is a homeomorphism to claim that the image of $\phi$ in $C(n; p, q)$ is disjoint from $V$.  This is because the intersection of $U$ with the image of $\phi$ is already mapped by $\psi$ onto $B_\varepsilon(x)$, so no other points in the image of $\phi$ can map into $B_\varepsilon(x)$.  Thus, applying $f$ to $[\phi]$ is the same as taking the relative class of $\psi \circ \phi$ in $H_j(T^j, T^j \setminus B_{\varepsilon}(x))$, which is $\pm 1 \in \mathbb{Z}$.

To show that $f([\sigma \circ \phi]) = 0$ for all non-identity permutations $\sigma \in E$, our construction of $U$ guarantees that the image of $\sigma \circ \phi$ is disjoint from $U$.  Thus, in the composition that gives $f$, the first map into $H_j(C, C \setminus U)$ sends $[\sigma \circ \phi]$ to zero.
\end{proof}

\section{Building the tiles}\label{sec:maketile}

In this section, we describe the rigid cycle tiles needed for the proof of the main theorem, Theorem~\ref{thm:strong-lower}.

\subsection{Single-cluster tiles} 
We specify cycles in $C(n; p, q)$ as in~\cite{ABKMS20}.  Roughly, given a larger box in the plane containing two or more smaller boxes, we may be able to move the smaller boxes to make a piecewise linear loop in the configuration space of the smaller boxes in the larger box.  For instance, a box of side length $k$ and up to $k+1$ boxes of side length $1$ can orbit each other in a box of side length $k+1$, and two or three boxes of side length $k$ can orbit each other in a box of side length $2k$.  The smaller boxes may, in turn, enclose even smaller boxes orbiting each other, making a nested pattern of boxes ending with our pieces of side length $1$ at the smallest and innermost level.  The degree of the resulting cycle is the number of circle parameters, which is the number of boxes we have drawn that enclose either pieces or smaller boxes.

\begin{figure}


    \centering
    \figsinglecluster
    \caption{The rigid cycle tiles $I_k$, $\Delta_k$, and $Y_{2k}$ have one cluster each, and their recursively-constructed cycles can be represented by drawings of nested boxes.}
    \label{fig:singlecluster}
\end{figure}

Our construction relies on the following types of single-cluster rigid cycle tiles, depicted in Figure~\ref{fig:singlecluster}.
\begin{itemize}
    \item The ``diagonal $k$-tile'' $I_k$ is the $k$ by $k$ rigid cycle tile consisting of $k$ pieces on a diagonal, forming a $(k-1)$-cycle.  This cycle is also constructed in Lemma~6.2 of~\cite{ABKMS20}.
    \item The ``triangular $k$-tile'' $\Delta_k$ is the rigid cycle tile formed by taking $I_k$ and packing one side of the diagonal with extra pieces.
    \item The ``extremal $2k$-tile'' $Y_{2k}$ is a rigid cycle tile that consists of three copies of $I_k$ orbiting each other in a $2k$ by $2k$ square.  The corresponding points $(\frac{n}{a}, \frac{j}{a})(Y_{2k})$ are equal to $s_k$, the vertices of the feasible region.  This cycle is also constructed in Lemma~6.3 of~\cite{ABKMS20}.
\end{itemize}

\subsection{Multi-cluster fenced tiles}
There are many ways to arrange rigid cycle tiles side by side to form larger tiles.  Not every way of arranging them maintains the property that there is no way to perturb the pieces while preserving the slopes of the bars, but if this property holds, then the resulting large tile is a rigid cycle tile, and its degree is the sum of degrees of the smaller cycles that comprise it.

We say that a rigid cycle tile is \textit{\textbf{fenced}} if all of the squares along the boundary of the rectangle are shaded.  Fenced tiles are particularly useful, because if a rectangle is partitioned into fenced rigid cycle tiles, then it is automatically a (fenced) rigid cycle tile.

\begin{figure}


    \centering
    \figquilt
    \caption{Each fenced quilt tile $Q_{k, w, h}$ is formed by surrounding a grid of copies of $X_{4k}$ by a border of triangular tiles $\Delta_k$, and with solid $1$-tiles in the corners.  The tile with $k = 3$, $w = 3$, and $h = 2$ is pictured.}
    \label{fig:quilt}
\end{figure}

We use the following constructions to assemble our single-cluster rigid cycle tiles into bigger tiles, and then into fenced tiles.
\begin{itemize}
    \item The ``symmetric extremal $4k$-tile'' $X_{4k}$ consists of four copies of $Y_{2k}$, arranged symmetrically to form a rigid cycle tile with the four empty quadrants together at the center.
    \item The ``solid $1$-tile'' $I_1$ consists of a single shaded square forming a $0$-cycle.
    \item The ``fenced quilt tile'' $Q_{k, w, h}$ consists of $wh$ copies of $X_{4k}$, arranged in a rectangle $w$ copies wide and $h$ copies high, with a border of triangular $k$-tiles $\Delta_k$, with solid $1$-tiles to fill in the corners, as in Figure~\ref{fig:quilt}.
    \item The ``fenced one-hole $2k$-tile'' $O_{2k}$ consists of four copies of $\Delta_k$ arranged symmetrically with the four empty quadrants together at the center, as in Figure~\ref{fig:ob}.
    \item The ``fenced two-hole tile'' $B_6$ consists of one copy of $I_2$, six copies of $\Delta_2$, and solid $1$-tiles to fill in the corners, as in Figure~\ref{fig:ob}.
\end{itemize}

\begin{figure}


    \centering
    \figob
    \caption{The small fenced tiles $O_{2k}$ and $B_6$ are possible ways to combine extra triangular tiles, or copies of the diagonal tile $I_2$.}
    \label{fig:ob}
\end{figure}

\section{Main proof}\label{sec:proofs}

In this section, Lemma~\ref{lem:sum} shows that we can figure out the right collection of tiles, and Lemma~\ref{lem:pack} shows that we can fit those tiles in the big square.  Using these two lemmas, we can finish the proof of our main theorem, Theorem~\ref{thm:strong-lower}.

In both lemmas, our approach is to divide the feasible region $\mathcal{R}$ into a sequence of triangles.  In $S_+$, we observe that the vertex $(0, 0)$ is the limit of the vertices $s_k = (\frac{3}{4k}, \frac{3}{4k} - \frac{1}{2k^2})$.  Thus, omitting $(0, 0)$ from $S_+$ does not change the interior of the convex hull.  We can divide the interior into triangles by connecting each vertex $s_k$ to the vertex $(1, 0)$.  The triangles are formed by $(1, 0)$ and two consecutive vertices $s_k$ and $s_{k+1}$.  Every point $(x, y)$ in the interior of $\mathcal{R}$ is either in the interior of one of these triangles, or on the interior of a line segment from some $s_k$ to $(1, 0)$.  Thus, given $(x, y)$ we can find the corresponding value of $k$ and focus our attention only on the triangle with vertices $(1, 0)$, $s_k$, and $s_{k+1}$.

The first of the two lemmas in this section shows that if $(\frac{n_i}{p_iq_i}, \frac{j_i}{p_iq_i})$ is sufficiently close to $(x, y)$, then we can find a collection of rigid cycle tiles such that the totals of their contributions to number of pieces, degree, and area are $n_i$, $j_i$, and $p_iq_i$.

\begin{lemma}\label{lem:sum}
Suppose that $p_i \leq q_i$, $p_i \rightarrow \infty$, and the sequence $\left(\frac{n_i}{p_iq_i}, \frac{j_i}{p_iq_i}\right)$ converges to a point (x, y) in the interior of the convex hull of $s_k = (\frac{n}{a}, \frac{j}{a})(Y_{2k})$, $s_{k+1} = (\frac{n}{a}, \frac{j}{a})(Y_{2(k+1)})$, and $(1, 0) = (\frac{n}{a}, \frac{j}{a})(I_1)$, or on the interior of the segment from $s_k$ to $(1, 0)$.  Then there is some $\varepsilon > 0$ depending on $(x, y)$ such that for all sufficiently large $i$, the vector $(n_i, j_i, p_iq_i)$ can be written as the sum of the vectors $v(T)$ as $T$ ranges over the following collection of rigid cycle tiles:
\begin{itemize}
    \item at most one large fenced quilt tile $Q_{k, w_1, h_1}$, with total width between $p_i-4k$ and $p_i$;
    \item at most one skinny fenced quilt tile $Q_{k, w_3, 1}$, with total width at most $p_i$;
    \item at most one large fenced quilt tile $Q_{k+1, w_2, h_2}$, with total width between $p_i-4(k+1)$ and $p_i$;
    \item at most one skinny fenced quilt tile $Q_{k+1, w_4, 1}$, with total width at most $p_i$;
    \item at most $2q_i$ of each of the fenced one-hole tiles $O_{2k}$ and $O_{2(k+1)}$;
    \item at most $100k^2$ of the fenced two-hole tiles $B_6$;
    \item at most $200k^2$ of the fenced one-hole tiles $O_4$;
    \item at most $3$ of the triangular tiles $\Delta_2$; and
    \item at least $\varepsilon \cdot p_iq_i$ of the solid $1$-tiles $I_1$.
\end{itemize}
\end{lemma}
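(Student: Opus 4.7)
The plan is to split the decomposition into a \emph{bulk} part that realizes the density $(x,y)$ using large fenced quilt tiles, plus \emph{corrections} made with the small fenced tiles and $I_1$'s. First, write $(x,y) = \lambda_1 s_k + \lambda_2 s_{k+1} + \lambda_3 (1,0)$ as a convex combination with $\lambda_1, \lambda_3 > 0$ and $\lambda_2 \geq 0$ (with $\lambda_2 = 0$ exactly in the segment case), and set $\varepsilon \in (0, \lambda_3/2)$. This will leave room for at least $\varepsilon p_iq_i$ of the $I_1$ tiles at the end, since the density of $I_1$ is exactly $(1,0)$ and $\lambda_3$ is the weight of this vertex.

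For the bulk, I would pick integers $w_1, h_1, w_2, h_2$ so that $Q_{k, w_1, h_1}$ has width in $[p_i - 4k, p_i]$ and area close to $\lambda_1 p_i q_i$, and similarly $Q_{k+1, w_2, h_2}$ has width in $[p_i - 4(k+1), p_i]$ and area close to $\lambda_2 p_i q_i$. Each $X_{4k}$-block inside a quilt contributes exactly the area-normalized vector of $Y_{2k}$, and the border and corner contributions are linear in $w + h$ rather than $wh$, so the two large quilts together match the intended target $\lambda_1 p_iq_i \cdot v(Y_{2k})/a(Y_{2k}) + \lambda_2 p_iq_i \cdot v(Y_{2(k+1)})/a(Y_{2(k+1)})$ up to an error of $O(p_i + q_i)$ in each coordinate. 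The skinny quilts $Q_{k, w_3, 1}$ and $Q_{k+1, w_4, 1}$ are reserved to shift widths by single columns of width $4k$ or $4(k+1)$, thereby cleaning up the rounding error from the area approximation.

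Now set $(\Delta n, \Delta j, \Delta a) := (n_i, j_i, p_iq_i) - \sum_{\text{quilts}} v(T)$, which has size $O(q_i)$ in each coordinate. Since $v(I_1) = (1,0,1)$, the $I_1$'s can absorb any equal residue in $\Delta n$ and $\Delta a$ but cannot alter $\Delta j$; so the remaining tiles must match $\Delta j$ exactly and also equalize $\Delta n$ with $\Delta a$. I would use up to $2q_i$ copies each of $O_{2k}$ and $O_{2(k+1)}$ to cancel the $O(q_i)$-sized parts of $\Delta j$ and $\Delta a - \Delta n$, and then a bounded number of $B_6$, $O_4$, and $\Delta_2$ tiles to hit the remaining small residues. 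The number of $I_1$ tiles is then forced to be the common value $n_i - n' = p_iq_i - a'$, which by the choice of $\varepsilon$ is at least $\varepsilon p_iq_i$ for $i$ large.

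The main obstacle is the exact matching of the degree and piece-count residues using only bounded numbers of the smallest tiles. One has to compute $v(O_{2k}), v(O_{2(k+1)}), v(B_6), v(O_4), v(\Delta_2)$ from the definitions in Section~\ref{sec:maketile} and verify that positive integer combinations respecting the stated counts cover every residue class of $(\Delta j, \Delta a - \Delta n)$ modulo the periods imposed by $v(O_{2k})$ and $v(O_{2(k+1)})$. This is essentially a numerical semigroup argument, and the explicit bounds ($100k^2$, $200k^2$, $3$) must be justified by verifying that they suffice for every residue that can arise from the quilt-dimension rounding in the bulk step. The remaining geometric question---arranging all these tiles disjointly in the $p_i \times q_i$ rectangle---is handled separately in Lemma~\ref{lem:pack}.
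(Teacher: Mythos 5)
Your outline has the right architecture (a bulk of quilt tiles realizing the densities $\lambda_1, \lambda_2, \lambda_3$, plus small fenced tiles and $I_1$'s to absorb the residual), and your choice of $\varepsilon$ from $\lambda_3$ matches the paper's. But the step you yourself flag as ``the main obstacle'' --- exactly matching the residual vector $(\Delta n, \Delta j, \Delta a)$ with bounded numbers of small tiles --- is precisely the content of the lemma, and you have not supplied it. Deferring it to ``a numerical semigroup argument'' that ``must be justified'' leaves the proof incomplete: it is not at all automatic that every residue arising from the rounding is hit by nonnegative integer combinations of $v(O_{2k})$, $v(O_{2(k+1)})$, $v(B_6)$, $v(O_4)$, $v(\Delta_2)$, $v(I_1)$ within the stated count bounds, and in particular the one-hole tiles all contribute \emph{positive} degree, so they can only ``cancel'' a residual $\Delta j$ of the right sign and divisibility.

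The paper closes this gap by a different bookkeeping order. It first \emph{reserves} $8q_i$ copies of $\Delta_k$, $8q_i$ copies of $\Delta_{k+1}$, and $700k^2$ copies of $\Delta_2$ (subtracting their vectors up front), then decomposes the remainder $(N,J,A)$ as $L_1 v(X_{4k}) + L_2 v(X_{4(k+1)}) + L_3 v(I_1)$ plus a remainder $(N',J',A')$ bounded by $100k^2$ in each coordinate, and then expands that remainder \emph{exactly} in the integer basis $v(\Delta_2)=(3,1,4)$, $v(I_2)=(2,1,4)$, $v(I_1)=(1,0,1)$, checking that the $I_2$ and $I_1$ coefficients are nonnegative because the inequalities $j\le a-n$ and $4j\le a$ hold for the generating vectors, and that the possibly negative $\Delta_2$ coefficient is repaired by the reserved $700k^2$ copies. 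The fenced tiles $Q$, $O$, $B_6$ are then \emph{assembled} from these pieces (the quilts consume a multiple of $4$ of each border triangle, so the leftovers group into one-hole tiles; each $I_2$ plus six $\Delta_2$ plus eight $I_1$ forms a $B_6$), rather than being chosen to cancel residues. You should also note that your argument does not handle the segment case: when $(x,y)$ lies on the segment from $s_k$ to $(1,0)$ you have $\lambda_2=0$, and one must check that after subtracting the reserved triangular tiles the point $(\tfrac{N}{A},\tfrac{J}{A})$ moves strictly to the $s_{k+1}$ side of the line through $(1,0)$ and $s_k$, so that all three coefficients $\lambda'_1,\lambda'_2,\lambda'_3$ are nonnegative; the paper verifies this by comparing the ratio of $a-n$ to $j$ for the relevant vectors.
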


\begin{proof}
To find $\varepsilon$, we have assumed that $(x, y)$ can be written as the convex combination $\lambda_1 s_k + \lambda_2s_{k+1} + \lambda_3(1, 0)$ with $\lambda_1 + \lambda_2 + \lambda_3 = 1$ and $\lambda_1, \lambda_3 > 0$.  We take $\varepsilon = \frac{1}{2}\lambda_3$.

We subtract $8q_i \cdot v(\Delta_k)$, $8q_i \cdot v(\Delta_{k+1})$, and $700k^2\cdot v(\Delta_2)$ from $(n_i, j_i, p_iq_i)$ to get some vector $(N, J, A)$.  If $i$ is sufficiently large, then $(\frac{N}{A}, \frac{J}{A})$ is still very close to $(x, y)$.  If $(x, y)$ is on the segment from $s_k$ to $(1, 0)$, then we can assume that $\left(\frac{N}{A}, \frac{J}{A}\right)$ is inside the triangle between $s_k$, $s_{k+1}$, and $(1, 0)$.  To see why, we observe that the line through $(1, 0)$ and $s_k$ corresponds to the set of points where the ratio between $a-n$ (that is, number of unshaded grid squares) and $j$ is equal to $k\cdot \frac{4k-3}{3k-2}$.  The point $s_{k+1}$ is on the side of the line with a greater such ratio, whereas the vectors $v(\Delta_k)$, $v(\Delta_{k+1})$, and $v(\Delta_2)$ have a lesser such ratio, so by subtracting them we move toward the side of the line that $s_{k+1}$ is on.

We can write $\left(\frac{N}{A}, \frac{J}{A}\right) = \lambda'_1 s_k + \lambda'_2 s_{k+1} + \lambda'_3(1, 0)$ with $\lambda'_1 + \lambda'_2 + \lambda'_3 = 1$.  (As $i$ becomes large, we know that $\lambda'_1, \lambda'_2, \lambda'_3$ approach $\lambda_1, \lambda_2, \lambda_3$.)  Because the areas of tiles $X_{4k}$ and $X_{4(k+1)}$ are $16k^2$ and $16(k+1)^2$, this implies
\[(N, J, A) = \frac{\lambda'_1\cdot A}{16k^2}\cdot v(X_{4k}) + \frac{\lambda'_2\cdot A}{16(k+1)^2}\cdot v(X_{4(k+1)}) + \lambda'_3\cdot A\cdot (1, 0, 1).\]
We set $L_1 = \left \lfloor \frac{\lambda'_1\cdot A}{16k^2}\right \rfloor$, $L_2 = \left\lfloor \frac{\lambda'_2\cdot A}{16(k+1)^2}\right\rfloor$, and $L_3 = \lfloor \lambda'_3\cdot A\rfloor$.  Then we define 
\[(N', J', A') = (N, J, A) - L_1 \cdot v(X_{4k}) - L_2 \cdot v(X_{4(k+1)}) - L_3 \cdot v(I_1).\] 
This remainder $(N', J', A')$ is a linear combination of $v(X_{4k})$, $v(X_{4(k+1)})$, and $v(I_1)$ with all coefficients between $0$ and $1$.  In particular, each of $N'$, $J'$, and $A'$ is less than $100k^2$.  Expanding $(N', J', A')$ in the basis for $\mathbb{R}^3$ given by $(3, 1, 4)$, $(2, 1, 4)$, and $(1, 0, 1)$, we have
\begin{align*}
(N',\ &J', A') \\
&= [2J' - (A'-N')] \cdot (3, 1, 4) + (A' - N' - J') \cdot (2, 1, 4) + (A' - 4J')\cdot (1, 0, 1)\\
&= [2J' - (A' - N')] \cdot v(\Delta_2) + (A' - N' - J') \cdot v(I_2) + (A' - 4J')\cdot v(I_1).
\end{align*}
Next we observe that the second and third coefficients are nonnegative integers.  This is because the inequalities $j \leq a-n$ and $4j \leq a$ are true of $v(X_{4k})$, $v(X_{4(k+1)})$, and $v(I_1)$, so they are also true of a positive linear combination of them.  The first coefficient $2J' - (A' - N')$ may be negative, but if so, its magnitude is at most $A' \leq 100k^2$; if it is positive, its magnitude is at most $J' \leq 100k^2$, since $A'-N'-J'$ is nonnegative.  Thus, we have
\[100k^2\cdot v(\Delta_2) + (N', J', A') = c_1\cdot v(\Delta_2) + c_2 \cdot v(I_2) + c_3\cdot v(I_1),\]
where $0 \leq c_1 \leq 200k^2$, $0 \leq c_2 \leq 100k^2$, and $0 \leq c_3$.

In this way, we have written $(n_i, j_i, p_iq_i)$ as a sum of $v(T)$, where $T$ ranges over the following collection of tiles:
\begin{itemize}
    \item $L_1$ copies of $X_{4k}$,
    \item $L_2$ copies of $X_{4(k+1)}$,
    \item $8q_i$ copies of $\Delta_k$,
    \item $8q_i$ copies of $\Delta_{k+1}$,
    \item at most $100k^2$ copies of $I_2$,
    \item between $600k^2$ and $800k^2$ copies of $\Delta_2$, and
    \item at least $L_3$ copies of the solid $1$-tile $I_1$.
\end{itemize}
What remains is to assemble these into the fenced tiles of the lemma statement.

We make fenced quilted tiles as follows.  Let $w_1$ be the largest integer such that $4k \cdot w_1 + 2k \leq p_i$; that is, a quilted tile that is $w_1$ copies of $X_{4k}$ wide can fit in the $p_i$ by $q_i$ board.  Similarly, let $w_2$ be the largest integer such that $4(k+1) \cdot w_2 + 2k \leq p_i$.  Let $h_1$ and $w_3$ be the quotient and remainder when $L_1$ is divided by $w_1$, and let $h_2$ and $w_4$ be the quotient and remainder when $L_2$ is divided by $w_2$.  Then we construct the fenced quilted tiles $Q_{k, w_1, h_1}$, $Q_{k, w_3, 1}$, $Q_{k+1, w_2, h_2}$, and $Q_{k+1, w_4, h_2}$ using the $L_1$ copies of $X_{4k}$, the $L_2$ copies of $X_{4(k+1)}$, an appropriate number of copies of $\Delta_k$ and $\Delta_{k+1}$, and $8k^2 + 8(k+1)^2$ of the solid $1$-tiles $I_1$.

Not all copies of $\Delta_k$ and $\Delta_{k+1}$ are used up in making the fenced quilted tiles.  However, the number of each began as a multiple of $4$, and the number of each that we used to make the quilted tiles is a multiple of $4$.  Thus, the remaining copies can be put together in sets of $4$ to create some number of fenced one-hole tiles $O_{2k}$ and $O_{2(k+1)}$, with the number of each bounded by $2q_i$.

We combine each copy of $I_2$ with six copies of $\Delta_2$ and eight solid $1$-tiles to form the fenced two-hole tile $B_6$.  Then, we put together the remaining copies of $\Delta_2$ in sets of $4$ to create at most $200k^2$ fenced one-hole tiles $O_4$, with a remainder of at most $3$ copies of $\Delta_2$.  The result is the collection of tiles in the lemma statement.

To estimate the number of solid $1$-tiles $I_1$ in the final collection, we start with at least $L_3$ of them, and then use a bounded number of them to build the fenced quilted tiles and the fenced two-hole tiles $B_6$.  Because $L_3/p_iq_i$ converges to $\lambda_3 = 2\varepsilon$ as $i$ goes to $\infty$, for large $i$ after subtracting a bounded number from $L_3$ there are still at least $\varepsilon \cdot p_iq_i$ solid $1$-tiles.
\end{proof}

The next lemma shows that the collection of tiles we have just constructed can actually fit in the $p_i$ by $q_i$ rectangle.  This relies on having sufficiently many solid $1$-tiles to fill in between the larger tiles.

\begin{lemma}\label{lem:pack}
Suppose that $p_i \leq q_i$, $p_i \rightarrow \infty$, and the sequence $\left(\frac{n_i}{p_iq_i}, \frac{j_i}{p_iq_i}\right)$ converges to a point (x, y) in the interior of the convex hull of $s_k$, $s_{k+1}$, and $(1, 0)$, or on the interior of the segment from $s_k$ to $(1, 0)$.  Then for all sufficiently large $i$, the collection of rigid cycle tiles from Lemma~\ref{lem:sum} can be arranged to form a $p_i$ by $q_i$ rigid cycle tile.
\end{lemma}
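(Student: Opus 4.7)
The plan is to arrange the tiles given by Lemma~\ref{lem:sum} in a vertical stack of horizontal bands, each of width $p_i$ spanning the full width of the rectangle. Since every tile in the collection is fenced and has integer dimensions, each band can be built by placing its main fenced tiles flush against the left edge and padding the remaining width with solid $1$-tiles $I_1$; the resulting band is itself fenced. Stacked vertically, the bands partition the $p_i \times q_i$ rectangle into fenced tiles, which by the remark preceding Lemma~\ref{lem:sum} is automatically a fenced rigid cycle tile. So the lemma reduces to a geometric packing question.

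From the bottom up, I would use the following bands. First, a band of height $H_1 = 4k h_1 + 2k$ holding $Q_{k, w_1, h_1}$ with $I_1$-padding of width less than $4k$. Next, an analogous band of height $H_2 = 4(k+1) h_2 + 2(k+1)$ for $Q_{k+1, w_2, h_2}$. Then two short bands of heights $6k$ and $6(k+1)$ for the skinny quilts $Q_{k, w_3, 1}$ and $Q_{k+1, w_4, 1}$. Then a band of height $2k \lceil N_{O_{2k}} / \lfloor p_i/(2k) \rfloor \rceil$ packing the $O_{2k}$-tiles into a grid of rows with $I_1$-padding in each row, and an analogous band for $O_{2(k+1)}$. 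Then one band of constant height holding the bounded number of $B_6$, $O_4$, and $\Delta_2$ tiles with $I_1$-padding. Finally, a top band consisting entirely of $I_1$-tiles. Because the area sum of Lemma~\ref{lem:sum} already accounts for the full area $p_i q_i$, if the first six groups of bands have total height at most $q_i$, then the remaining $I_1$-tiles exactly fill out a top band and the packing is exact.

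To verify this, the dominant contribution is $H_1 + H_2$. Using $w_1 = \lfloor (p_i - 2k)/(4k)\rfloor \approx p_i/(4k)$ and $L_1 \approx \lambda'_1 p_i q_i/(16k^2)$ from the proof of Lemma~\ref{lem:sum}, one finds $h_1 \approx L_1/w_1 \approx \lambda'_1 q_i/(4k)$, so $H_1/q_i \to \lambda_1$ and similarly $H_2/q_i \to \lambda_2$. The two $O$-bands together have height bounded by $O(k^2 q_i/p_i) + O(k)$, which is $o(q_i)$ since $p_i \to \infty$ and $k$ is fixed, regardless of how $q_i/p_i$ behaves. The skinny-quilt and constant-tile bands contribute only $O_k(1)$. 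Summing, the total height $h^{*}$ of the first six groups satisfies $h^{*} / q_i \to \lambda_1 + \lambda_2 = 1 - \lambda_3 = 1 - 2\varepsilon$, so for large $i$ we have $h^{*} \leq (1 - \varepsilon) q_i < q_i$, as required.

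The main obstacle is precisely this height estimate: one must handle the regime where $q_i/p_i$ can be unbounded, and check that the floor and ceiling errors in the definitions of $w_j, h_j$ and in the row counts of each $O$-band contribute only $o(q_i)$ rather than spoiling the asymptotic $H_1 + H_2 \sim (1 - 2\varepsilon) q_i$. Once these estimates are confirmed, the stacked-band construction yields the desired $p_i \times q_i$ rigid cycle tile and the lemma follows.
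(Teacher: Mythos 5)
Your overall strategy---stack horizontal bands of width $p_i$, pad each band with solid $1$-tiles, and invoke the fact that a rectangle partitioned into fenced rigid cycle tiles is itself a rigid cycle tile---is the same as the paper's, and your height asymptotics ($H_1/q_i\to\lambda_1$, $H_2/q_i\to\lambda_2$, the $O$-bands contributing $O(k^2 q_i/p_i)+O(k)=o(q_i)$) do check out. The paper reaches the conclusion $h^*<q_i$ by a slightly slicker route: rather than computing the heights of the strips, it bounds the number of $I_1$-tiles consumed as padding by $M\cdot q_i$, notes that this is $o(\varepsilon p_iq_i)$ so some $I_1$-tiles must remain, and concludes that the total area (hence total height) of the non-trivial strips falls short of $p_iq_i$ (hence of $q_i$). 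Either bookkeeping works.

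There is, however, one genuine gap: your premise that ``every tile in the collection is fenced'' is false. The collection from Lemma~\ref{lem:sum} contains up to $3$ leftover copies of $\Delta_2$, and a triangular tile $\Delta_k$ has an unshaded square on its boundary, so it is not fenced. This is not merely a bookkeeping issue: if you place a $\Delta_2$ in the interior of a band and pad around it with solid $1$-tiles, an $I_1$-piece adjacent to the unshaded square of the $\Delta_2$ can slide into that hole, and since an $I_1$-piece carries no bars this motion does not change $\psi$. The configuration is then not an isolated point of its $\psi$-fiber, so the assembled tile fails to be a rigid stress tile and Lemma~\ref{lem:permute} cannot be applied. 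The paper avoids this by placing the (at most $3$) copies of $\Delta_2$ in two height-$2$ strips at the very top and bottom of the rectangle, oriented so that each unshaded square sits in a corner of the $p_i$ by $q_i$ rectangle, where the walls of the box do the fencing. With that modification your argument goes through.
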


\begin{proof}
Our strategy is to combine the rigid cycle tiles in the collection into larger rigid cycle tiles, each a rectangle of width $p_i$.  If this collection of strips uses up all the tiles in the collection that are not solid $1$-tiles $I_1$, but uses up at most $M\cdot q_i$ solid $1$-tiles for some constant $M$ that depends on $k$ but not on $q_i$, then for sufficiently large $i$ there are solid $1$-tiles left over, because $\lim \frac{M \cdot q_i}{\varepsilon\cdot p_iq_i} = 0$.  This implies that the total area of our collection of strips is less than $p_iq_i$, so the total height is less than $q_i$.  By stacking the strips on top of each other, and putting in a strip of solid $1$-tiles to cover the remainder of the area, we can be sure that the strips fit in the $p_i$ by $q_i$ rectangle.

The large fenced quilt tiles $Q_{k, w_1, h_1}$ and $Q_{k+1, w_2, h_2}$ fill almost all of the width $p_i$.  We stack them vertically and need to add at most $(k+1) \cdot q_i$ solid $1$-tiles to fill the rest of the width.  The skinny fenced quilt tiles $Q_{k, w_3, 1}$ and $Q_{k+1, w_4, 1}$ may have small width but have bounded height, so when we stack them vertically, we need to add at most $6(k+1)\cdot p_i$ solid $1$-tiles to fill the rest of the width.

We can arrange the $2q_i$ fenced one-hole tiles $O_{2k}$ in rows, with up to $\left\lfloor \frac{p_i}{2k}\right\rfloor \geq \frac{p_i - 2k}{2k}$ copies of $O_{2k}$ in each row.  The total number of such rows that we need is at most $\frac{2q_i \cdot 2k}{p_i - 2k}$.  Similarly, we can arrange the $2q_i$ copies of $O_{2(k+1)}$ into at most $\frac{2q_i \cdot 2(k+1)}{p_i - 2(k+1)}$ rows.  The number of solid $1$-tiles needed to fill in around them is the difference between the area of the rectangle of width $p_i$ enclosing these rows, and the total area of the tiles.  This is at most
\[\frac{2q_i \cdot 2k}{p_i - 2k} \cdot 2k \cdot p_i - 2q_i(2k)^2 + \frac{2q_i \cdot 2(k+1)}{p_i - 2(k+1)} \cdot 2(k+1) \cdot p_i - 2q_i(2(k+1))^2\]
\[ = 2q_i \cdot \frac{(2k)^3}{p_i - 2k} + 2q_i \cdot \frac{(2(k+1))^3}{p_i - 2(k+1)},\]
which in particular is at most a constant multiple of $q_i$.

The bounded number of copies of $B_6$ and $O_4$ can be arranged, for large $q_i$, in one strip of height $6$.  The number of solid $1$-tiles needed is at most the total area of this strip, which is $6p_i$.  Then, there are up to $3$ copies of the triangular tile $\Delta_2$, which we would like to place so that the unfilled squares are in the corners of the $p_i$ by $q_i$ rectangle.  To do this, we make two strips of height $2$, to go on the bottom and top of the $p_i$ by $q_i$ rectangle, using a total of at most $4p_i$ solid $1$-tiles.

We have described how to use up all the tiles except the solid $1$-tiles in strips of width $p_i$, in such a way that the number of solid $1$-tiles used is at most linear in $q_i$.  Because the total number of solid $1$-tiles is at least $\varepsilon\cdot p_iq_i$, for large $p_i$ we cannot have used all of them.  Thus, we arrange the remaining solid $1$-tiles in another rectangular strip of width $p_i$.  We stack these strips to fill the $p_i$ by $q_i$ rectangle, with the copies of $\Delta_2$ at the corners, to make a rigid cycle tile.
\end{proof}

We are now ready to combine Lemmas~\ref{lem:sum} and~\ref{lem:pack} to prove our main theorem.

\begin{proof}[Proof of Theorem~\ref{thm:strong-lower}]
Because $(x, y)$ is in the interior of the feasible region, either it is in the interior of the segment with vertices $(1, 0)$ and $s_k$, or it is in the interior of the triangle with vertices $(1, 0)$, $s_k$, and $s_{k+1}$.  For sufficiently large $i$, we apply Lemmas~\ref{lem:sum} and~\ref{lem:pack} to construct a rigid cycle tile $T$ with $n(T), j(T), p(T), q(T)$ equal to $n_i, j_i, p_i, q_i$.  The largest clusters in $T$ are those coming from copies of $Y_{2(k+1)}$ if $k \leq 4$ and from copies of $\Delta_{k + 1}$ if $k \geq 4$, so in any case they have less than $3(k+1)^2$ pieces.  Applying Lemma~\ref{lem:permute}, we have
\[\dim H_{j_i}[C(n_i; p_i, q_i)] \geq \frac{n_i!}{\prod_\ell n_{i, \ell}!},\]
with each $n_{i, \ell} \leq 3(k+1)$ and $\sum_\ell n_{i,\ell} = n_i$.  The ratio between $\log$ of the denominator and $\log n_i!$ is
\[\frac{\sum_\ell \log(n_{i, \ell}!)}{\log(n_i!)} \leq \frac{n_i \cdot \log\ (3(k+1)^2)!}{\log\ n_i!} \rightarrow 0,\]
so we have
\[\lim_{i \rightarrow \infty} \frac{\log\ \dim H_{j_i}[C(n_i; p_i, q_i)]}{\log\ n_i!} \geq 1.\]

Combining this lower bound with the upper bound of Lemma~\ref{lem:upper}, we conclude that $\dim H_{j_i}[C(n_i; p_i, q_i)]$ has the factorial growth rate of $n_i!$.
\end{proof}

\section{Open problems}\label{sec:conclusion}

We close with a few suggestions for future directions. \\

\begin{enumerate}
    \item Although some headway is made in \cite{ABKMS20}, we still do not have necessary and sufficient conditions 
    for $H_{j}[C(n; p, q)] \neq 0$. We do not believe that there is any nontrivial homology outside the feasible region $\mathcal{R}$. A slightly weaker necessary condition for nontrivial homology, simpler to state, is the following.

    \begin{conjecture}
    Suppose that $H_j[C(n; p, q)] \neq 0$.  Let $x = \frac{n}{pq}$ and $y = \frac{j}{pq}$.  Then $y \leq \min\{x - \frac{8}{9}x^2, 1-x, \frac{1}{4}\}$.
    \end{conjecture}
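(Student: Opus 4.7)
The conjecture asserts three upper bounds on the area-normalized coordinates $(x,y)$ supporting nontrivial $H_j[C(n;p,q)]$. The bound $y \leq 1-x$ is already established in \cite{ABKMS20}, so the new content is the parabolic bound $y \leq \tfrac{8}{9}x^2$-sharpened form of $y \leq x$, namely $y \leq x - \tfrac{8}{9}x^2$, and the horizontal bound $y \leq \tfrac{1}{4}$, which strengthens the known $y \leq \tfrac{1}{3}$. A structural feature any proof must respect is that each vertex $s_k = (3/(4k),\ (3k-2)/(4k^2))$ of $\mathcal{R}$ lies \emph{exactly} on the parabola $y = x - \tfrac{8}{9}x^2$, so the tiles $Y_{2k}$ constructed in this paper already saturate it; the points $s_1 = (3/4, 1/4)$ and $s_2 = (3/8, 1/4)$ also show that $y = 1/4$ is achieved along a whole segment.

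The plan is to work at the cell level of the cubical complex $X(n;p,q) \simeq C(n;p,q)$ from \cite{ABKMS20}, whose cells correspond to combinatorial patterns of pieces moving in coordinated groups. The goal is to construct an explicit discrete Morse matching on $X(n;p,q)$ that collapses it onto a much smaller subcomplex whose critical $j$-cells can be enumerated by local ``circular motion'' data. The conjectured bounds would then be derived as combinatorial inequalities forced on surviving critical cells. For the horizontal bound $y \leq 1/4$, the key observation is that every elementary rotation recorded in a critical cell needs at least four grid squares of area and contributes at most one to the homological degree, so summing over disjoint elementary rotations yields $j \leq pq/4$.

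For the parabolic bound, the natural approach is local-to-global. Conjecture that every cluster of $\ell$ pieces occupying area $a$ contributes at most $\ell - \tfrac{8}{9}\ell^2/a$ to the degree (saturated exactly by the $Y_{2k}$ clusters). Summing across clusters of a suitably chosen cycle representative with $\sum_i \ell_i = n$ and $\sum_i a_i = pq$, and applying Cauchy--Schwarz in the form $\sum_i \ell_i^2/a_i \geq n^2/(pq)$, one obtains $j \leq n - \tfrac{8}{9}n^2/(pq)$, which in area-normalized form is exactly $y \leq x - \tfrac{8}{9}x^2$. The cluster framework of Section~\ref{sec:whytile} suggests that such a decomposition is at least plausible when a cycle admits a representative built from disjointly supported circular motions.

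The main obstacle is that the per-cluster inequality $j_{\mathrm{cluster}} \leq \ell - \tfrac{8}{9}\ell^2/a$ is itself a nontrivial optimality claim, essentially asserting that $Y_{2k}$ is globally optimal among single-cluster configurations of given area and piece count---something neither proved nor disproved here. Moreover, cell-counting of the type used in Lemma~\ref{lem:upper} is hopelessly too weak: one must exclude nontrivial cocycles, not merely cells, so any honest proof must rule out cancellations in the chain complex and show that every putative $j$-cycle violating the inequalities is a boundary. Constructing a discrete Morse matching sharp enough to realize every vertex $s_k$ as a surviving critical cell while forbidding any critical cells above the parabola is the crux, and will almost certainly require substantial new combinatorial input beyond the constructions of \cite{ABKMS20}.
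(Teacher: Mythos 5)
First, a point of logic: this statement is posed in the paper as a \emph{conjecture} in Section~\ref{sec:conclusion}; the paper offers no proof of it, and the only established bound in this direction is the weaker $y \leq \min\{x, 1-x, \tfrac{1}{3}\}$ from \cite{ABKMS20}. Your submission is likewise not a proof but a program, and you candidly say so. Your supporting observations are correct and worth recording: each vertex $s_k = \bigl(\tfrac{3}{4k}, \tfrac{3}{4k}-\tfrac{1}{2k^2}\bigr)$ does lie exactly on the parabola $y = x - \tfrac{8}{9}x^2$ (substitute $x = \tfrac{3}{4k}$), and $s_1, s_2$ both sit on $y = \tfrac{1}{4}$, so the conjectured bounds are tight against the $Y_{2k}$ constructions. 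The arithmetic reduction via $\sum_i \ell_i^2/a_i \geq n^2/(pq)$ is also valid \emph{conditional on} the per-cluster inequality and on $\sum_i a_i \leq pq$.

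The genuine gaps are the two steps you flag plus one you understate. (1) The per-cluster bound $j_{\mathrm{cluster}} \leq \ell - \tfrac{8}{9}\ell^2/a$ is not a lemma you can defer: for a single cluster it \emph{is} the conjecture, so the local-to-global scheme is circular unless the single-cluster case is attacked by independent means. (2) The cluster decomposition of Section~\ref{sec:whytile} is a property of the specific torus maps $\phi$ constructed there; an arbitrary nonzero class in $H_j[C(n;p,q)]$ need not admit a cycle representative supported on disjointly moving groups of pieces, and nothing in \cite{ABKMS20} supplies such a normal form. The same objection applies to the $y \leq \tfrac{1}{4}$ argument: ``every critical cell decomposes into disjoint elementary rotations, each costing area $4$'' presumes exactly the structure theorem that is missing, and the paper's own cycles (e.g.\ $I_k$, contributing degree $k-1$ from one connected motion) show that degree is not localized to independent $2\times 2$ rotations. (3) The proposed discrete Morse matching is named but not constructed, and as you note, a cell count in the spirit of Lemma~\ref{lem:upper} cannot distinguish cycles from boundaries. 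So the proposal is a reasonable research plan consistent with the evidence, but it does not establish the statement, and no comparison with a paper proof is possible because none exists.
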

    For comparison, it is shown in \cite{ABKMS20} that $y \leq \min\{x , 1-x, \frac{1}{3}\}$.
    
    It seems to us that even if the conjecture holds, this necessary condition is probably not quite sufficient. For example, if $p =q =5$, $n=12$, and $j=6$, then $(x,y)$ is in the feasible region, but we expect that $H_j[C(n;p,q)]=0$. \\

    \item Describe the boundary between the homological liquid and gas regimes. In other words, give necessary and sufficient conditions on $n,j,p,q$ for the natural inclusion map $i:C(n;p,q) \hookrightarrow C(n)$ to induce an isomorphism $i_*: H_{j}[C(n; p, q)] \to H_j [C(n)]$.
    
    \begin{conjecture}
    Suppose that $p, q \ge j+2$. Then 
    \[
    i_*: H_{j}[C(n; p, q)] \to H_j [C(n)]
    \]
    is an isomorphism if and only if $pq - n \ge (j+1)(j+2)$.
    \end{conjecture}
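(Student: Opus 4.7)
\smallskip

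\noindent\textbf{Proof plan.} The plan is to split the biconditional into a sufficient direction ($pq-n \geq (j+1)(j+2)$ implies $i_*$ is an isomorphism) and a necessary direction (the reverse implication).

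For sufficiency, the strategy is to establish a stabilization result. Fix $n$ and $j$. I claim that whenever $p, q \geq j+2$ and $pq - n \geq (j+1)(j+2)$, the group $H_j[C(n;p,q)]$ and all natural inclusion-induced maps are invariant under enlarging the rectangle. Combined with the equivalence $C(n;n,n) \simeq C(n)$ noted after Lemma~\ref{lem:upper}, this will identify $H_j[C(n;p,q)]$ with $H_j[C(n)]$ via $i_*$. To prove stabilization I would work with the cubical model $X(n;p,q)$ of \cite{ABKMS20} and either (a) exhibit an explicit retraction of $X(n;p,q+1)$ onto $X(n;p,q)$ that is an isomorphism in dimensions $\leq j$, or (b) build a discrete Morse function on $X(n;p,q)$ whose critical $j$-cells are indexed by a combinatorial set independent of $p,q$ in the stable range and bijective with the Arnold basis of $H_j[C(n)]$. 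The $j+2$ hypothesis and the area bound $(j+1)(j+2)$ enter to guarantee that any local piece arrangement has enough slack in each direction to execute and to undo any codimension-$j$ Arnold-type motion.

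For necessity, the plan is to construct, for every $n,j,p,q$ with $p,q \geq j+2$ and $pq-n < (j+1)(j+2)$, an explicit witness to the failure of $i_*$. The idea is to produce a rigid cycle tile in the sense of Section~\ref{sec:whytile} with the property that its supporting motion is forced by the hard-core constraint but becomes freely contractible in $C(n)$. Such a tile would give a nonzero class in $\ker i_*$, hence $i_*$ is not injective. The extremal number $(j+1)(j+2)$ suggests looking at a $(j+1)\times(j+2)$ window packed tightly with $(j+1)(j+2) - s$ pieces for some small $s$, and engineering a rigid $j$-cycle in that window whose combinatorics collapses once pieces are allowed to pass through one another.

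The main obstacle will be the necessity direction. The constructions of Section~\ref{sec:maketile} produce rigid cycle tiles in the liquid regime, but they are designed to be nontrivial in $C(n;p,q)$; showing that a particular tile's class vanishes in $C(n)$ requires an explicit nullhomotopy through $C(n)$, which is intuitively easy (the pieces can simply separate) but must be carried out in a way compatible with the tile's combinatorial bookkeeping. Moreover, matching the threshold exactly at $(j+1)(j+2)$ probably requires a new family of tiles, parametrized by $j$, whose slack area is sharp. The sufficiency direction, by contrast, is likely to follow from a careful but routine discrete Morse analysis on $X(n;p,q)$ once the right pairing of non-critical cells is identified.
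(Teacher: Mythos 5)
This statement is one of the open conjectures in Section~\ref{sec:conclusion}; the paper offers no proof of it (only the remark that it is consistent with computations for $n \le 6$), so there is nothing to compare your argument against---and, more importantly, what you have written is a research plan rather than a proof. Neither direction is actually established. For sufficiency, the entire content would lie in the stabilization claim that $H_j[X(n;p,q)]$ is independent of $p,q$ once $p,q\ge j+2$ and $pq-n\ge (j+1)(j+2)$, but you give no construction of the retraction or of the discrete Morse function, and no argument for why the threshold is exactly $(j+1)(j+2)$ rather than, say, $(j+1)^2$ or $j(j+2)$; the sentence about ``enough slack to execute and undo any codimension-$j$ Arnold-type motion'' merely restates the heuristic already in the paper (that $pq-n$ measures the free space on the board). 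For necessity, you would need, for \emph{every} $n,j,p,q$ with $p,q\ge j+2$ and $pq-n<(j+1)(j+2)$, either a nonzero class in $\ker i_*$ or a class of $H_j[C(n)]$ not in the image; you only discuss the kernel, and you do not produce the tile. Note also a structural mismatch with the machinery of Sections~\ref{sec:whytile}--\ref{sec:maketile}: a rigid cycle tile is certified nontrivial by a cocycle built from the map $\psi$, which is defined on all of $C(n)$, and in the simplest cases (e.g.\ the warm-up torus of disjoint orbiting pairs) the resulting class survives to $H_j[C(n)]$ as a product of Arnold classes, so it is \emph{not} in $\ker i_*$. Detecting a kernel class requires showing both that it is nonzero in $C(n;p,q)$ and that it dies in $C(n)$, and the paper's detection mechanism is built for the first task only. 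So the proposal identifies plausible lines of attack but leaves every essential step open; the statement remains a conjecture.
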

    This conjecture is consistent with the data we computed for $n \le 6$ in \cite{ABKMS20}. Note that $pq-n$ is the amount of ``space'' on the board, i.e.,\ the total number of squares in the rectangle minus the number of sliding pieces. \\
    
    \item Sharper estimates for the Betti numbers would be welcome. It may be that Theorem \ref{thm:strong-lower} can be improved to the following statement capturing the ``second-order'' asymptotics.
    \begin{conjecture} \label{conj:stronger}
Let $(x, y)$ be any point in the interior of $\mathcal{R}$.   Let $n_i, j_i, p_i, q_i$ be sequences such that $p_i \leq q_i$, $p_i \rightarrow \infty$, $\frac{n_i}{p_iq_i} \rightarrow x$, and $\frac{j_i}{p_iq_i} \rightarrow y$.  Then 
\[
\log \dim H_{j_i}[C(n_i; p_i, q_i)] = n_i \log n_i + C_{x,y} n_i + o(n_i).
\]
Here $C_{x,y}$ is a constant that only depends on $x$ and $y$.\\
\end{conjecture}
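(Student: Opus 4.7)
The plan is to establish matching upper and lower bounds via a variational formula for $C_{x,y}$. Given a rigid cycle tile $T$ with cluster sizes $n_{T,1}, n_{T,2}, \ldots$, Stirling's approximation together with $\sum_\ell n_{T,\ell} = n(T)$ gives
\[
\log \frac{n(T)!}{\prod_\ell (n_{T,\ell})!} = n(T) \log n(T) - \sum_\ell n_{T,\ell} \log n_{T,\ell} + O(L_T \log B_T),
\]
where $L_T$ is the number of clusters and $B_T$ the maximum cluster size; this error is $O(n(T))$ so long as cluster sizes remain bounded. One would then define
\[
C_{x,y} := -\frac{1}{x} \inf\left\{ \sum_T \nu_T \sum_\ell n_{T,\ell} \log n_{T,\ell} \;:\; \sum_T \nu_T (n(T), j(T), a(T)) = (x,y,1),\ \nu_T \ge 0 \right\},
\]
where the infimum is over measures on rigid cycle tile types with bounded cluster size. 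A separate convexity and continuity analysis is needed to show this quantity is well-defined and continuous on the interior of $\mathcal{R}$; in particular one would enlarge the library of tiles beyond that of Section~\ref{sec:maketile}, e.g.\ by parametrized families indexed by integer shape parameters whose normalized data $(n(T)/a(T), j(T)/a(T))$ sweeps out the interior of $\mathcal{R}$.

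For the lower bound, I would refine Lemmas~\ref{lem:sum} and~\ref{lem:pack} to use a composition near-optimal for the LP defining $C_{x,y}$, rather than the explicit finite composition chosen there. Boundary tiles, fenced corrections, and solid $1$-tile slack contribute only $O(\max(p_i, q_i)) = o(n_i)$ pieces, so their effect on $\sum_\ell n_\ell \log n_\ell$ is negligible. Combining with Lemma~\ref{lem:permute} then yields
\[
\log \dim H_{j_i}[C(n_i;p_i,q_i)] \ge n_i \log n_i + C_{x,y}\, n_i - o(n_i).
\]

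The principal obstacle is the matching upper bound. Lemma~\ref{lem:upper} is too crude by a factor of $e^{\Theta(n_i)}$: it counts cells in the ambient cube $G(n;p,q)$ rather than the actual hard-square subcomplex $X(n;p,q) \hequiv C(n;p,q)$, and it entirely ignores cancellations in the chain complex. A sharp upper bound would require either (i) a precise enumeration of $j$-cells of $X(n;p,q)$, plausibly via transfer-matrix or cluster-expansion techniques from the statistical mechanics of hard squares, together with a subexponential bound on chain-complex cancellation; or (ii) a discrete Morse function on $X(n;p,q)$ whose critical $j$-cells are enumerated by the LP dual of the infimum defining $C_{x,y}$, in which case both bounds would follow simultaneously. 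Neither tool is currently available at the second-order precision required, and developing one is where the substantive difficulty lies. A final step, assuming such an upper bound is in hand, is LP duality together with continuity of $C_{x,y}$ in $(x,y)$ to confirm that the upper and lower variational quantities coincide.
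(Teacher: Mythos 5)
The statement you have set out to prove is Conjecture~\ref{conj:stronger}, which the paper explicitly leaves open; there is no proof in the paper to compare against, and your proposal does not close the gap either. You candidly identify the decisive missing piece yourself: a matching upper bound. Lemma~\ref{lem:upper} only gives $\log \dim H_{j_i} \leq n_i\log n_i + O(n_i)$ with a constant coming from counting cells of the ambient complex $G(n_i;p_i,q_i)$, which has nothing to do with your proposed variational constant; without a sharp upper bound you cannot even conclude that the coefficient of $n_i$ converges, let alone identify it. Neither of the tools you name (a transfer-matrix cell count with controlled cancellation, or a discrete Morse function whose critical cells match the LP dual) is constructed, so the proposal is a plan for a proof, not a proof. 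A further unaddressed issue on the same side: even granting a sharp count of cells or of critical cells, the lower bound only certifies that the second-order constant is \emph{at least} your LP value, since Lemma~\ref{lem:permute} only detects homology classes arising from permutation families of tile cycles; you give no argument that these exhaust $H_{j_i}$ up to a factor of $e^{o(n_i)}$.

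Two smaller but genuine problems in the parts you do sketch. First, your Stirling expansion has error $O(L_T\log B_T)$, and with bounded cluster sizes $L_T = \Theta(n(T))$, so the error is $\Theta(n(T))$ --- the same order as the term $C_{x,y}n_i$ you are trying to determine. The variational functional must therefore be written with the exact quantity $\sum_\ell \log\left(n_{T,\ell}!\right)$ rather than $\sum_\ell n_{T,\ell}\log n_{T,\ell}$; as stated, your formula for $C_{x,y}$ is off at the relevant order. Second, the well-definedness, convexity, and continuity of the infimum over an infinite library of tile types, and the existence of near-optimal compositions realizable by integer tile counts that actually pack into a $p_i$ by $q_i$ rectangle (the analogue of Lemmas~\ref{lem:sum} and~\ref{lem:pack} for an arbitrary near-optimizer rather than the explicit finite menu used there), are asserted but not carried out. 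These are plausibly doable, but they are real work, and together with the absent upper bound they mean the conjecture remains open after your argument.
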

    \item Estimates for the Betti numbers of configuration spaces of hard disks in a box (rather than hard squares) would be interesting. These configuration spaces were studied earlier, for example in \cite{BBK14} and \cite{Alpert17}, but so far little seems to be known.  For instance, for $n$ unit disks in a $p$ by $q$ rectangle, can we find values of $x = \frac{n}{pq}$ and $y = \frac{j}{pq}$ for which we can prove the conclusion of Theorem~\ref{thm:strong-lower}?
\end{enumerate}

\bibliographystyle{amsalpha}
\bibliography{refs}

\end{document}